\newcommand{\lra}{\longrightarrow}
\newcommand{\RR}{\mathbb{R}}
\newcommand{\vep}{\varepsilon}
\newcommand*{\defeq}{\mathrel{\rlap{%
                     \raisebox{0.25ex}{$\m@th\cdot$}}%
                     \raisebox{-0.25ex}{$\m@th\cdot$}}%
                     =}
\newcommand*\owedge{\mathpalette\@owedge\relax}
\newcommand*\@owedge[1]{%
  \mathbin{%
    \ooalign{%
      $#1\m@th\bigcirc$\cr
      \hidewidth$#1\m@th\wedge$\hidewidth\cr
    }%
  }%
}
\newtheorem{thm}{Theorem}
\newtheorem{lemma}{Lemma}
\newtheorem{cor}{Corollary}
\newtheorem{prop}{Proposition}
\newtheorem*{definition-non}{Definition}
\newtheorem*{theorem-non}{Theorem}
\newtheorem*{proposition-non}{Proposition}
\newtheorem*{lemma-non}{Lemma}
\newtheorem*{corollary-non}{Corollary}
\newcommand{\beqa}{\begin{eqnarray}}
\newcommand{\beq}{\begin{equation}}
\newcommand{\eeqa}{\end{eqnarray}}
\newcommand{\eeq}{\end{equation}}
\newcommand\ipr[2]{\langle {#1},{#2}\rangle_{\!\scalebox{0.6}{$g$}}}
\newcommand\iph[2]{\langle {#1},{#2}\rangle_{\!\scalebox{0.6}{$h$}}}
\newcommand\ww[2]{#1 \wedge #2}
\newcommand\imp{\hspace{.2in}\Rightarrow\hspace{.2in}}
\newcommand\cd[2]{\nabla_{\!#1}{#2}}
\newcommand\Ric{\text{Ric}_{\scalebox{0.6}{$g$}}}
\newcommand\Rmr{\text{Rm}_{\scalebox{0.6}{$g$}}}
\newcommand\comma{\hspace{.2in},\hspace{.2in}}
\newcommand\commas{\hspace{.1in},\hspace{.1in}}
\newcommand\commass{\hspace{.05in},\hspace{.05in}}
\newcommand\hsh{*_{\scalebox{0.5}{$h$}}}
\newcommand\hsr{*_{\scalebox{0.6}{$g$}}}
\newcommand\gsec{\text{sec}_{\scalebox{0.6}{$g$}}}
\newcommand\co{\hat{R}_{\scalebox{0.6}{$g$}}}
\newcommand\cow{\hat{R}_{\scalebox{0.6}{\scalebox{0.7}{$\wedge$}}}}
\newcommand\Ls[2]{\Lambda^{#2}_{\scalebox{0.6}{$#1$}}}
\newcommand\coh{\hat{R}_{\scalebox{0.6}{$g$-$h$}}}
\newcommand\Rsec{\sigma_{\scalebox{0.6}{$g$}}}
\newcommand\ghsec{\sigma_{\scalebox{0.6}{$g$-$h$}}}
\newcommand\sgh{\text{scal}_{\scalebox{0.6}{$g$-$h$}}}
\newcommand\Ricgh{\text{Ric}_{\scalebox{0.6}{$g$-$h$}}}
\providecommand{\customgenericname}{}
\newcommand{\newcustomtheorem}[2]{%
  \newenvironment{#1}[1]
  {%
   \renewcommand\customgenericname{#2}%
   \renewcommand\theinnercustomgeneric{##1}%
   \innercustomgeneric
  }
  {\endinnercustomgeneric}
}
\def\Ddots{\mathinner{\mkern1mu\raise\p@
\vbox{\kern7\p@\hbox{.}}\mkern2mu
\raise4\p@\hbox{.}\mkern2mu\raise7\p@\hbox{.}\mkern1mu}}
\newcommand{\Sph}{\mathbb{S}}
\newcommand{\CP}{\mathbb{CP}}
\newcommand{\dVh}{dV_{\scalebox{0.6}{$h$}}}
\newcommand{\dVg}{dV_{\scalebox{0.6}{$g$}}}
\newcommand\qh{q_{\scalebox{0.6}{$h$}}}
\newcommand\rhoz{\rho_{\scalebox{0.6}{$0$}}}
\begin{document}
\title[]{Hodge splittings and Einstein 4-manifolds}
\author[]{Amir Babak Aazami}
\address{Clark University\hfill\break\indent
Worcester, MA 01610}
\email{aaazami@clarku.edu}

\maketitle

\begin{abstract}
On an oriented $4$-manifold, we study pairs of Riemannian metrics $(g,h)$ for which the curvature tensor of $g$ preserves the Hodge splitting determined by $h$. This extends the Einstein condition in dimension four, which is recovered when $h$ is conformal to $g$. We prove that such pairs satisfy a generalized Hitchin--Thorpe inequality, which reduces to the classical one when $h$ is conformal to $g$.  We then exhibit a pair $(g,h)$ on $\#_5\CP^2$, which violates Hitchin--Thorpe and hence admits no Einstein metric, thus showing that our condition is indeed broader than the Einstein condition. \end{abstract}

\section{Introduction}
On an oriented $4$-manifold $M$, which pairs of Riemannian metrics have the property that the curvature tensor of one leaves the Hodge splitting of the other invariant? In other words, which pairs $(g,h)$ satisfy
\beqa
\label{eqn:gh}
\Rmr(\Ls{h}{+},\Ls{h}{-})=0,
\eeqa
where $\Rmr$ is the curvature bilinear form of $g$ and $\Lambda^2=\Ls{h}{+}\oplus\Ls{h}{-}$ is the decomposition of $\Lambda^2$ into the self-dual and anti-self-dual eigenspaces of $h$'s Hodge star $\hsh$? Because the Hodge star on $2$-forms is conformally invariant in dimension four, what is relevant to \eqref{eqn:gh} is actually the oriented conformal class $[h]$, or equivalently the splitting $\Lambda^2=\Ls{h}{+}\oplus\Ls{h}{-}$ itself. The guiding example is the Einstein case: some, and hence every, $h\in[g]$ satisfies \eqref{eqn:gh} if and only if $g$ is Einstein \cite{berger,thorpe2}.
\vskip 6pt
In fact \eqref{eqn:gh} separates the algebra of the curvature tensor from the analytic problem of varying $g$ itself. Let $Q$ denote the wedge pairing on $\Lambda^2$ defined by $Q(\xi,\eta)\dVg\defeq \xi\wedge\eta$, and let $\cow\colon\Lambda^2\lra\Lambda^2$ be the $Q$-self-adjoint endomorphism determined by $Q(\cow(\xi),\eta)\defeq \Rmr(\xi,\eta)$. Then in fact $\cow=\hsr\co$; if $g$ is Einstein, then $\hsr\co = \co\hsr$, so $\cow$ is real-diagonalizable.  But the latter is not true in general, as $Q$ has signature $(3,3)$. Our first result identifies this as the precise obstruction to \eqref{eqn:gh}, and then uses it to derive a topological obstruction when $M$ is compact.  Indeed, setting $\qh(\xi,\eta)\defeq Q(\xi,\hsh\eta)$ and
$$
\rhoz(g,h)
\defeq
\sup_M\left\|\frac12\left(\hsr+\hsh\!\hsr\!\hsh\right)\right\|_{\text{$\qh$}},
$$
we show that this quantity gauges the deviation from the classical Hitchin--Thorpe inequality \cite{thorpe3,hitchin}.

\begin{theorem-non}[Relative Hitchin--Thorpe inequality]
If a compact oriented 4-manifold $M$ admits a Riemannian pair $(g,h)$ satisfying \emph{$\Rmr(\Ls{h}{+},\Ls{h}{-})=0$}, then
$$
2\chi(M)\ge \frac{3}{\rhoz(g,h)}|\tau(M)|,
$$
with $\rhoz \geq 1$ and equal to $1$ if and only if $h\in[g]$\emph{;} the latter yields the classical Hitchin--Thorpe inequality. Finally, $\chi(M) = 0$ if and only if $g$ is flat.
\end{theorem-non}

Our second and main result is that $\rhoz$ is essential, because \eqref{eqn:gh} can violate Hitchin--Thorpe. In violating Hitchin--Thorpe, our example goes in a different direction from those obtained via Seiberg--Witten theory in \cite{lebrun3}.

\begin{theorem-non}[A Hitchin--Thorpe-violating example]
$\#_5\CP^2$ admits a Riemannian pair $(g,h)$ satisfying \emph{$\Rmr(\Ls{h}{+},\Ls{h}{-})=0$}, but no Einstein metric.
\end{theorem-non}

(Here $\rhoz(g,h) \geq \frac{15}{14}$.) Another aspect of \eqref{eqn:gh} that we examine is its variational character.  For fixed $g$, consider the $h$-scalar contraction of $g$'s curvature tensor,
$$
\Ricgh \defeq \mathrm{tr}_{\scalebox{0.6}{$h$}}\Rmr \comma \sgh\defeq \mathrm{tr}_{\scalebox{0.6}{$h$}}(\Ricgh).
$$
Then we show that on compact $M$, the functional
$$
\mathcal{A}_{\scalebox{0.6}{$g$}}(h)\defeq \int_M \sgh\,dV_{\scalebox{0.6}{$h$}},
$$
which is conformally invariant in $h$, has Euler--Lagrange equation 
\beqa
\label{eqn:Ricc_gen}
\Ricgh =\frac{\sgh}{4}h,
\eeqa
and that this is equivalent to \eqref{eqn:gh}.  In contrast to the usual Einstein-Hilbert action, which yields a second-order elliptic PDE, the variation of $\mathcal{A}_{\scalebox{0.6}{$g$}}$ here is algebraic: $g$ is fixed, no derivatives of the variation appear, and the equation is zero-order in $h$. Nevertheless, we present this as our third and final result because \eqref{eqn:Ricc_gen} generalizes the Einstein equation $\Ric = \lambda g$.
\vskip 6pt
The paper is organized as follows.  Section \ref{sec:quad_gen} reformulates the condition $\Rmr(\Ls{h}{+},\Ls{h}{-})=0$ in several equivalent ways, and also includes the variational characterization via $\mathcal{A}_{\scalebox{0.6}{$g$}}$.  Section \ref{sec:four} proves the diagonalizability criterion of $\cow$, the Euler obstruction, and the relative Hitchin--Thorpe inequality.  Section \ref{sec:examples} discusses examples and shared-frame curvature formulae.  Finally, Section \ref{sec:cp2} constructs the Riemannian pair $(g,h)$ on $\#_5\CP^2$ satisfying \eqref{eqn:gh}.

\section{Characterizing $\Rmr(\Ls{h}{+},\Ls{h}{-})=0$}
\label{sec:quad_gen}
We begin by reviewing the curvature operator (see, e.g., \cite{besse}). Given an oriented Riemannian $4$-manifold $(M,g)$ with Riemann curvature $4$-tensor
$$
\Rmr(v,w,x,y)=g(\cd{v}{\cd{w}{x}}-\cd{w}{\cd{v}{x}}-\cd{[v,w]}{x},y),
$$
recall that $g$'s \emph{curvature operator} $\co\colon\Lambda^2\lra\Lambda^2$ is the linear map defined by
\beqa
\label{eqn:co}
\ipr{\co(v\wedge w)}{x\wedge y}\defeq\Rmr(v,w,x,y)\hspace{.2in}\text{for all $v,w,x,y\in T_pM$},
\eeqa
where $\ipr{\,}{}$ is the $g$-induced inner product on the ${4\choose 2}=6$-dimensional second exterior product $\Lambda^2$:
\beqa
\label{eqn:g2}
\ipr{\ww{v}{w}}{\ww{x}{y}}\defeq \det\begin{bmatrix}g(v,x)&g(v,y)\\g(w,x)&g(w,y)\end{bmatrix}\cdot
\eeqa
Note that the pairwise symmetry $\Rmr(v,w,x,y)=\Rmr(x,y,v,w)$ ensures that $\co$ is $\ipr{\,}{}$-self-adjoint; note also that $\co$ satisfies the algebraic Bianchi identity with respect to $\ipr{\,}{}$. Had \eqref{eqn:co} been defined with a minus sign, then $\ipr{\co(v\wedge w)}{v\wedge w}=\Rmr(v,w,w,v)$ would be the sectional curvature of the $2$-plane $P\defeq\ww{v}{w}$ when $v,w$ are orthonormal (cf.~the minus signs in \eqref{eqn:Rgh-} and \eqref{eqn:Rg-} below). Thus the quadratic form of \eqref{eqn:co} plays an important role: if we denote by $\mathrm{Gr}_2(T_pM)$ the Grassmannian of $2$-planes in $T_pM$ (identified with the $\ipr{\,}{}$-unit decomposable bivectors in $\Lambda^2(T_pM)$), then the \emph{quadratic form of $\co$ at $p$} is the  function $\Rsec\colon\mathrm{Gr}_2(T_pM)\lra\RR$ defined by
\beqa
\label{eqn:quadR}
\Rsec(P)\defeq\ipr{\co P}{P}\comma \text{for all $P\in \mathrm{Gr}_2(T_pM)$}.
\eeqa
The second linear endomorphism on $\Lambda^2$ we shall need is the \emph{Hodge star operator} $\hsr\colon\Lambda^2\lra\Lambda^2$, defined via the equation
\beqa
\label{eqn:*def4}
\ww{\xi}{\hsr\eta}\defeq\ipr{\xi}{\eta}\,dV\comma \xi,\eta\in\Lambda^2,
\eeqa
where $dV\in\Lambda^4$ is the orientation form. By the classical work \cite{berger,thorpe2}, Einstein metrics are precisely those for which $\hsr$ commutes with $\co$:
\beqa
\label{eqn:var0}
\hsr\co=\co\!\hsr.
\eeqa
We now deepen the investigation begun in \cite{aazami}, namely, examining the geometry that arises when the Hodge star $\hsh$ arises from a different Riemannian metric $h$ on $M$. The key difference with \cite{aazami} is that here we work with a form of $\co$ that is adapted to $h$ (see \eqref{eqn:coh2}), or else with the product $\hsr\co$ itself (see Proposition \ref{prop:AJ} in Section \ref{sec:four}).
\vskip 6pt
Thus, given $(M,g)$, let $h$ be any other Riemannian metric on $M$ and denote by $\iph{\,}{}$ its corresponding inner product on $\Lambda^2$. If we use $\iph{\,}{}$ to express the symmetric bilinear form $\Rmr\colon\Lambda^2\times\Lambda^2\lra\RR$ as an endomorphism ``$\coh\colon\Lambda^2\lra\Lambda^2$,'' then its action is defined analogously to \eqref{eqn:co} as follows:
\beqa
\label{eqn:coh2}
\iph{\coh(v\wedge w)}{x\wedge y}\defeq\Rmr(v,w,x,y)\hspace{.2in}\text{for all $v,w,x,y\in T_pM$}.
\eeqa
($\Rmr$ is still the curvature $4$-tensor of $g$, not $h$.) Furthermore, just as with \eqref{eqn:quadR}, at each $p\in M$ we may associate to $\coh$ its $\iph{\,}{}$-quadratic form:
$$
\ghsec(P)\defeq\iph{\coh P}{P}\comma \text{for all $P\in \mathrm{Gr}_2(T_pM)$}.
$$
Observe that $\ghsec\neq\Rsec$, because the domains of the two quadratic forms are not equal: the set of $\iph{\,}{}$-unit decomposable bivectors is not the same as the set of the $\ipr{\,}{}$-unit ones. Indeed, if $P=\ww{v}{w}$ for an $h$-orthonormal pair $v,w$, then $\ww{v}{w}$ is in the domain of $\ghsec$, hence $\ghsec(P)=\Rmr(v,w,v,w)$; on the other hand, $\frac{\ww{v}{w}}{\sqrt{g(v,v)g(w,w)-g(v,w)^2}}$ is in the domain of $\Rsec$, so that
$$
\Rsec(P)=\frac{\Rmr(v,w,v,w)}{g(v,v)g(w,w)-g(v,w)^2}=\frac{\ghsec(P)}{g(v,v)g(w,w)-g(v,w)^2}\cdot
$$
Although $\ghsec(P)\neq\Rsec(P)$, the following is nevertheless true:

\begin{lemma}
\label{lemma:ghsec}
The quadratic form \emph{$\ghsec$} determines the curvature tensor of $g$.
\end{lemma}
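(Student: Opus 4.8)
The plan is to reduce the lemma to the classical fact that the sectional curvature function of a Riemannian metric determines its full Riemann $4$-tensor; the only wrinkle is that $\ghsec$ encodes that function after an $h$-dependent rescaling on each $2$-plane, which must be undone. The argument is pointwise and uses nothing beyond the defining relation \eqref{eqn:coh} (in particular not the commuting condition \eqref{eqn:def}), so fix $p\in M$. First I would recover the values $\Rmr(v,w,v,w)$ for \emph{all} $v,w\in T_pM$: given linearly independent $v,w$ spanning a $2$-plane $P\in\text{Gr}_2(T_pM)$, its representative $h$-unit decomposable bivector is $\pm\,\ww{v}{w}/\sqrt{\iph{\ww{v}{w}}{\ww{v}{w}}}$, so by \eqref{eqn:coh} and the $h$-analogue of the determinant formula \eqref{eqn:g2},
\beq
\ghsec(P)\;=\;\frac{\iph{\coh(\ww{v}{w})}{\ww{v}{w}}}{\iph{\ww{v}{w}}{\ww{v}{w}}}\;=\;\frac{\Rmr(v,w,v,w)}{h(v,v)h(w,w)-h(v,w)^2}\,,
\eeq
whence $\Rmr(v,w,v,w)=\ghsec(P)\big(h(v,v)h(w,w)-h(v,w)^2\big)$ is recovered from $\ghsec$ and $h$; for linearly dependent $v,w$ the left-hand side vanishes by antisymmetry. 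Thus $\ghsec$ determines $\Rmr(v,w,v,w)$ for every pair $v,w\in T_pM$, not merely $h$-orthonormal ones.

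Next I would invoke the standard polarization argument. Since $(v,w,x,y)\mapsto\Rmr(v,w,x,y)$ is antisymmetric in each of the pairs $(v,w)$ and $(x,y)$, is symmetric under interchanging the two pairs, and obeys the algebraic Bianchi identity, substituting $v\mapsto v+x$ in the relation above recovers $\Rmr(v,w,x,w)$ for all $v,w,x$, and then substituting $w\mapsto w+y$ recovers $\Rmr(v,w,x,y)+\Rmr(v,y,x,w)$ for all $v,w,x,y$. The remaining indeterminacy is a $4$-tensor $D$ that is antisymmetric in slots $(1,2)$, in slots $(3,4)$, and---from the last relation---under interchanging slots $2$ and $4$; as these transpositions generate $S_4$, the tensor $D$ is totally alternating, whereupon its Bianchi sum $D(v,w,x,y)+D(w,x,v,y)+D(x,v,w,y)$ equals $3\,D(v,w,x,y)$ and therefore vanishes, forcing $D=0$. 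Hence $\Rmr$ is determined at $p$, and, $p$ being arbitrary, on all of $M$.

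The step I expect to require the most care is the rescaling one rather than the polarization: if one knew $\ghsec$ only on, say, $h$-orthonormal frames---where $\ghsec(P)=\Rmr(v,w,v,w)$ outright---one could not run the polarizations, since these need the quadratic quantity on all of $T_pM\times T_pM$, and it is precisely division by the $h$-Gram determinant $h(v,v)h(w,w)-h(v,w)^2$ that propagates the frame values to every pair. Everything in the polarization step is classical and dimension-insensitive, so in a final write-up I would state it as a cited lemma on algebraic curvature tensors and omit the bookkeeping.
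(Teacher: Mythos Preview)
Your proof is correct and follows essentially the same route as the paper: both reduce to the classical fact that an algebraic curvature tensor is determined by its sectional-curvature-type quadratic form via polarization plus the Bianchi identity. The paper phrases this at the level of the operator $\coh$ on $\Lambda^2$---observing that $\coh$ is $\iph{\,}{}$-self-adjoint and satisfies Bianchi with respect to $\iph{\,}{}$, so its $\iph{\,}{}$-quadratic form determines it---and simply invokes ``the standard proof,'' whereas you drop to the $4$-tensor $\Rmr$ on $T_pM$, explicitly undo the $h$-normalization via the Gram determinant, and then spell out the polarization. The rescaling step you flag as delicate is in fact automatic in the paper's framing (homogeneity of the quadratic form of $\coh$ on decomposable bivectors), but your explicit version is equally valid.
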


\begin{proof}
The key is that, for any $g$ and $h$, $\coh$ is $\iph{\,}{}$-self-adjoint and satisfies the algebraic Bianchi identity with respect to it (by contrast, $\co$ does these with respect to $\ipr{\,}{}$). Now suppose that $\sigma_{\scalebox{0.6}{$\bar g$-$h$}}=\ghsec$, where $\bar g$ is another Riemannian metric on $M$. Then $\hat R_{\scalebox{0.6}{$\bar g$-$h$}}=\coh$ by the standard proof, since both endomorphisms are $\iph{\,}{}$-self-adjoint and satisfy the algebraic Bianchi identity with respect to it. Thus their symmetric bilinear forms $\text{Rm}_{\scalebox{0.6}{$\bar g$}},\Rmr\colon\Lambda^2\times\Lambda^2\lra\RR$ are equal.
\end{proof}

Next, define the mixed Ricci tensor and mixed scalar curvature of the pair $(g,h)$ by
$$
\Ricgh \defeq \mathrm{tr}_{\scalebox{0.6}{$h$}}\Rmr \qquad\text{and}\qquad \sgh\defeq \mathrm{tr}_{\scalebox{0.6}{$h$}}(\Ricgh ).
$$
In an oriented $h$-orthonormal frame $\{e_1,e_2,e_3,e_4\}$, and after setting \mbox{$\Rmr(e_i,e_j,e_k,e_l) \defeq R_{ijkl}$,}
$$
(\Ricgh )_{jl}=\sum_{i=1}^4R_{ijli}
\qquad\text{and}\qquad
\sgh=\sum_{j=1}^4(\Ricgh )_{jj}.
$$
These generalize the familiar $\Ric=\lambda g$ definition of the Einstein condition. We now present several equivalent formulations of $\Rmr(\Ls{h}{+},\Ls{h}{-})=0$.

\begin{prop}
\label{prop:traceequiv}
Let $(M,g)$ be an oriented Riemannian $4$-manifold and let $h$ be an auxiliary Riemannian metric. Then the following are equivalent:
\begin{enumerate}
\item[1.] \emph{$\Rmr(\Ls{h}{+},\Ls{h}{-})=0$.}
\item[2.] $\hsh\coh=\coh\hsh$.
\item[3.] \emph{$\ghsec(P)=\ghsec(\hsh P)$}.
\item[4.] \emph{$\Ricgh =\dfrac{\sgh}{4}h$}.
\end{enumerate}
\end{prop}

\begin{proof}
Fix $p\in M$. That 1.~is equivalent to 2.~follows from the fact that $\hsh\coh=\coh\hsh \iff \coh(\Ls{h}{\pm}) \subseteq \Ls{h}{\pm}$. Next, if $\hsh\coh=\coh\hsh$, then for any $2$-plane $P\in\mathrm{Gr}_2(T_pM)$,
$$
\underbrace{\iph{\coh(\hsh P)}{\hsh P}}_{\text{$\ghsec(\hsh P)$}}=\iph{\hsh(\coh P)}{\hsh P}=\underbrace{\iph{\coh P}{P}}_{\text{$\ghsec(P)$}},
$$
where we have used that $\hsh$ is $\iph{\,}{}$-self-adjoint and $\hsh^2=\mathrm{id}$. Conversely, if $\ghsec(P)=\ghsec(\hsh P)$ for all $P$, then
$$
\iph{(\hsh\coh\hsh)P}{P}=\underbrace{\iph{\coh(\hsh P)}{\hsh P}}_{\text{$\ghsec(\hsh P)$}}=\underbrace{\iph{\coh P}{P}}_{\text{$\ghsec(P)$}}.
$$
Thus $\hsh\coh\hsh$ and $\coh$ have equal $\iph{\,}{}$-quadratic forms.  Because they both satisfy the algebraic Bianchi identity with respect to $\iph{\,}{}$, it follows by the usual proof that they must be equal as endomorphisms $\Lambda^2\lra\Lambda^2$. This proves the equivalence of 2.~and 3. Next, choose an $h$-orthonormal basis $\{e_1,e_2,e_3,e_4\} \subseteq T_pM$. This determines the $\iph{\,}{}$-orthonormal basis
\beqa
\label{eqn:basis}
\{\ww{e_1}{e_2},\ww{e_1}{e_3},\ww{e_1}{e_4},\ww{e_3}{e_4},\ww{e_4}{e_2},\ww{e_2}{e_3}\}\subseteq\Lambda^2(T_pM)
\eeqa
with respect to which
$$
\coh=\begin{bmatrix}
R_{1212}&R_{1312}&R_{1412}&R_{3412}&R_{4212}&R_{2312}\\
R_{1213}&R_{1313}&R_{1413}&R_{3413}&R_{4213}&R_{2313}\\
R_{1214}&R_{1314}&R_{1414}&R_{3414}&R_{4214}&R_{2314}\\
R_{1234}&R_{1334}&R_{1434}&R_{3434}&R_{4234}&R_{2334}\\
R_{1242}&R_{1342}&R_{1442}&R_{3442}&R_{4242}&R_{2342}\\
R_{1223}&R_{1323}&R_{1423}&R_{3423}&R_{4223}&R_{2323}
\end{bmatrix}=\begin{bmatrix}A&B\\B^t&D\end{bmatrix},
$$
while
$$
\hsh=\begin{bmatrix}O&I\\I&O\end{bmatrix}\cdot
$$
Hence
$$
\hsh\coh=\coh\hsh \iff B^t=B\commass D=A.
$$
On the other hand, since $\Rmr$ is an algebraic curvature tensor, its $h$-Ricci decomposition has the usual $4$-dimensional form
\beqa
\label{eqn:Rgh-}
\coh=-\!
\begin{bmatrix}
W^+_{\scalebox{0.6}{$g$-$h$}}+\dfrac{\sgh}{12}I & K_{\scalebox{0.6}{$g$-$h$}}\\[4pt]
K_{\scalebox{0.6}{$g$-$h$}}^{t} & W^-_{\scalebox{0.6}{$g$-$h$}}+\dfrac{\sgh}{12}I
\end{bmatrix}
\eeqa
with respect to the splitting $\Lambda^2=\Ls{h}{+}\oplus\Ls{h}{-}$; here the block $K_{\scalebox{0.6}{$g$-$h$}}$, which satisfies $\begin{bmatrix}O&K_{\scalebox{0.6}{$g$-$h$}}\\K_{\scalebox{0.6}{$g$-$h$}}^t&O\end{bmatrix}=-\frac{1}{2}(\coh-\hsh \coh \hsh)$, is the image of the $h$-trace-free part
$$
\mathring{\text{Ric}}_{\scalebox{0.6}{$g$-$h$}}\defeq \Ricgh -\frac{\sgh}{4}h
$$
under the $O(T_pM,h)$-equivariant identification
$
S^2_0(T_pM,h)\cong \operatorname{Hom}(\Ls{h}{+},\Ls{h}{-}).
$
Therefore the commuting relation $\hsh\coh=\coh\hsh$ is equivalent to the vanishing of the off-diagonal block $K_{\scalebox{0.6}{$g$-$h$}}$, which in turn is equivalent to $\mathring{\text{Ric}}_{\scalebox{0.6}{$g$-$h$}}=0$. Thus
$$
\hsh\coh=\coh\hsh
\iff
\Ricgh =\frac{\sgh}{4}h.
$$
This proves the equivalence of $1.$ and $4.$, and completes the proof.
\end{proof}
Proposition \ref{prop:traceequiv} shows that the condition $\Rmr(\Ls{h}{+},\Ls{h}{-})=0$ is equivalent to the pure-trace equation $\Ricgh =\frac{\sgh}{4}h.$ We now close this section by showing that the latter equation opens the door to a variational characterization of $\Rmr(\Ls{h}{+},\Ls{h}{-})=0$; afterwards, we will comment on the difference between \eqref{eqn:Ag} below and the classical Einstein-Hilbert functional.

\begin{prop}
\label{thm:variational}
Fix a compact oriented Riemannian $4$-manifold $(M,g)$. For an auxiliary Riemannian metric $h$, define the functional
\emph{\beqa
\label{eqn:Ag}
\mathcal{A}_{\scalebox{0.6}{$g$}}(h)\defeq \int_M \sgh\,\dVh.
\eeqa}Then the Euler-Lagrange equation of $\mathcal{A}_{\scalebox{0.6}{$g$}}$ with respect to variations of $h$ \emph{(}with $g$ fixed\emph{)} is
\emph{\beqa
\label{eqn:EL}
\Ricgh =\frac{\sgh}{4}h.
\eeqa}Consequently, the critical points of $\mathcal{A}_{\scalebox{0.6}{$g$}}$ are exactly the auxiliary metrics $h$ for which \emph{$\Rmr(\Ls{h}{+},\Ls{h}{-})=0$.}
\end{prop}

\begin{proof}
Let $h_t\defeq h+tv$ be a smooth variation of $h$, where $v$ is a symmetric $(0,2)$-tensor, and keep $g$ fixed; note that $h_t$ remains Riemannian for small $|t|$. Note also that $\Rmr$ does not vary, only the $h$-contractions and the $h$-volume form do (in contrast to the Einstein-Hilbert action). Differentiating $h_t^{ik}(h_t)_{kj} = \delta_{ij}$, and using that $\frac{d}{dt}\big|_{t=0}\text{det}(h+tv) = \text{det}(h)\text{tr}(h^{-1}v)$, yields
$$
\left.\frac{d}{dt}\right|_{t=0}h_t^{ij}=-v^{ij}
\qquad\text{and}\qquad
\left.\frac{d}{dt}\right|_{t=0}dV_{\scalebox{0.6}{$h_t$}}=\frac12\,\mathrm{tr}_{\scalebox{0.6}{$h$}}(v)\,\dVh.
$$
Using the former, we now differentiate $\text{scal}_{\scalebox{0.6}{$g$-$h_t$}}= h_t^{jk}h_t^{il}R_{ijkl}$ at $t=0$,
\begin{align*}
\left.\frac{d}{dt}\right|_{t=0}\text{scal}_{\scalebox{0.6}{$g$-$h_t$}}
&=
-v^{jk}h^{il}R_{ijkl}-h^{jk}v^{il}R_{ijkl}\\
&=-2v^{jk}(\Ricgh )_{jk},
\end{align*}
where in the last step we used the symmetry $R_{ijkl}=R_{jilk}$. Hence
\beqa
\left.\frac{d}{dt}\right|_{t=0}\mathcal{A}_{\scalebox{0.6}{$g$}}(h_t) \!\!&=&\!\!
\int_M\left(\left.\frac{d}{dt}\right|_{t=0}\text{scal}_{\scalebox{0.6}{$g$-$h_t$}}\right)\,\dVh+
\int_M \sgh \left.\frac{d}{dt}\right|_{t=0}dV_{\scalebox{0.6}{$h_t$}}\nonumber\\
&=&\!\!
\int_M\Big(\!\!-\!2v^{jk}(\Ricgh )_{jk}+\frac12\sgh\,\mathrm{tr}_{\scalebox{0.6}{$h$}}(v)\Big)\,\dVh\nonumber\\
&=&\!\!
\int_M\Big\langle\!\!-\!2\Ricgh +\frac12\sgh h,\,v\Big\rangle_{\!\scalebox{0.6}{$h$}}\,\dVh.\nonumber
\eeqa
Since $v$ is arbitrary, the Euler-Lagrange equation is
$$
-2\,\Ricgh +\frac12\sgh h=0,
$$
which is exactly \eqref{eqn:EL}. The final claim now follows from Propositions \ref{prop:traceequiv}.
\end{proof}

Let us make two remarks regarding Proposition \ref{thm:variational}. First, it is enough to consider variations of the form $h_t=h+tv$ as we have done. Indeed, the space of smooth Riemannian metrics is an open subset of the vector space $\Gamma(S^2T^*M)$, so its tangent space at $h$ identifies with the full space of smooth symmetric $(0,2)$-tensors $v$. Hence every such $v$ determines a tangent direction at $h$, and, since positive-definiteness is an open condition, the linear path $h_t=h+tv$ remains Riemannian for all sufficiently small $|t|$. Moreover, the first variation of $\mathcal{A}_{\scalebox{0.6}{$g$}}$ at $h$ depends only on the initial velocity $\dot h_0=v$. Therefore vanishing of the first variation for all smooth variations through $h$ is equivalent to vanishing for all linear variations $h_t=h+tv$, and these suffice to obtain the Euler-Lagrange equation. Second, although
$$
\mathcal{A}_{\scalebox{0.6}{$g$}}(g)=\int_M \text{scal}_{\scalebox{0.6}{$g$}}\,\dVg
$$
recovers the Einstein-Hilbert functional, the two variational problems are quite different. In the latter one varies a metric that determines both the curvature tensor and the volume form, so the first variation requires differentiating the Levi-Civita connection and curvature; as is well known, this yields a second-order elliptic PDE. Here, by contrast, $g$ is fixed throughout, and only the auxiliary metric $h$ is varied; as mentioned above, $\Rmr$ does not change. Moreover, unlike the ordinary Einstein-Hilbert functional, no separate volume normalization is needed here: in dimension four, $\mathcal{A}_{\scalebox{0.6}{$g$}}$ is already scale-invariant in $h$, and thus depends only on the oriented conformal class $[h]$. Indeed, if $\tilde h=e^{2u}h$, then $\tilde h^{-1}=e^{-2u}h^{-1}$ and $dV_{\scalebox{0.6}{$\tilde{h}$}}=e^{4u}\dVh$, so that
$
\text{scal}_{\scalebox{0.6}{$g$-$\tilde{h}$}}=e^{-4u}\sgh
$
and hence
$$
\text{scal}_{\scalebox{0.6}{$g$-$\tilde{h}$}}\,dV_{\scalebox{0.6}{$\tilde{h}$}}=\sgh\,\dVh.
$$
This is compatible with the fact that $\hsh$ on $2$-forms is conformally invariant in dimension four.

\section{Topological obstructions to $\Rmr(\Ls{h}{+},\Ls{h}{-})=0$}
\label{sec:four}
We now return to the endomorphism $\cow$ mentioned in the Introduction. Recall that in dimension four the Hodge star $\hsh$ depends only on the oriented conformal class of $h$, so that choosing $h$ is equivalent to choosing an oriented conformal class $[h]$. This motivates the following reformulation of $\Rmr(\Ls{h}{+},\Ls{h}{-})=0$; as we shall see, it will yield a precise algebraic obstruction to $\Rmr(\Ls{h}{+},\Ls{h}{-})=0$, which in turn will yield a topological obstruction that will generalize the Hitchin--Thorpe inequality.

\begin{prop}
\label{prop:AJ}
Fix $\Lambda^2\defeq\Lambda^2(T_pM)$. Let $Q$ be the wedge pairing on $\Lambda^2$,
\beqa
\label{eqn:Qform}
Q(\xi,\eta)\dVg \defeq \ww{\xi}{\eta} \comma \xi,\eta \in \Lambda^2,
\eeqa
and let \emph{$\Rmr(\xi,\eta)$} denote the curvature bilinear form of $g$ on $\Lambda^2$. Then the $Q$-self-adjoint endomorphism $\cow\colon\Lambda^2\lra\Lambda^2$ defined by the equation \emph{$
Q(\cow(\xi),\eta) \defeq \Rmr(\xi,\eta)
$} satisfies $$\cow = \hsr\co.$$ Furthermore, for an auxiliary Riemannian metric $h$, the following are equivalent at $p$:
\begin{enumerate}
\item[1.] \emph{$\Rmr(\Ls{h}{+},\Ls{h}{-})=0$}.
\item[2.] $\cow(\Ls{h}{\pm})\subseteq\Ls{h}{\pm}$.
\item[3.] $\cow \hsh=\hsh\cow$.
\end{enumerate}
\end{prop}

\begin{proof}
Since $\hsh$ has eigenspace decomposition $\Lambda^2=\Ls{h}{+}\oplus\Ls{h}{-}$, the equivalence of $2.$ and $3.$ is immediate. Next, because
\beqa
\label{eqn:Qhsh}
Q(\xi,\hsh\eta)\dVh\overset{\eqref{eqn:Qform}}{=}\ww{\xi}{\hsh\eta}\overset{\eqref{eqn:*def4}}{=}\iph{\xi}{\eta}\dVh,
\eeqa
it follows that, for $\xi_{\pm}\in\Ls{h}{\pm}$,
$$
Q(\xi_\pm,\xi_\pm)\dVh = \pm Q(\xi_\pm,\hsh\xi_\pm)\dVh = \pm\iph{\xi_\pm}{\xi_\pm}\dVh.
$$
Thus $\Ls{h}{+}$ is $Q$-positive, $\Ls{h}{-}$ is $Q$-negative, and the splitting is $Q$-orthogonal (note that changing the volume form from $\dVg$ to $\dVh$ rescales $Q$ by a positive factor, preserving positive and negative subspaces). Furthermore, by construction we have
$
Q(\cow (\xi_+),\xi_-) = \Rmr(\xi_+,\xi_-).
$
Because $\Ls{h}{-}=(\Ls{h}{+})^{\perp_{\scalebox{0.5}{$Q$}}}$, the vanishing of all such mixed terms is equivalent to $\cow(\xi_+)$ being $Q$-orthogonal to $\Ls{h}{-}$, hence to $\cow(\xi_+)\in\Ls{h}{+}$. Likewise on $\Ls{h}{-}$. This proves the equivalence of $1.$ and $2$. Finally, $\cow = \hsr\co$ follows from
$$
\ipr{\co(\xi)}{\eta} = \Rmr(\xi,\eta) = Q(\cow(\xi),\eta) = \ipr{\cow(\xi)}{\hsr\eta} = \ipr{\hsr\cow(\xi)}{\eta},
$$
with the second-to-last equality by an argument as in \eqref{eqn:Qhsh}.
\end{proof}

In fact the question of whether $\Rmr(\Ls{h}{+},\Ls{h}{-})=0$ is possible can be recast in terms of the action of $\mathrm{SL}(4,\RR)$ on $\Lambda^2$.  Indeed, recall that the latter acts on $\Lambda^2$ via $A(\ww{e_i}{e_j}) = \ww{(Ae_i)}{(Ae_j)}$, and satisfies
$$
Q(A\xi,A\eta)dV = \ww{(A\xi)}{(A\eta)} = A(\ww{\xi}{\eta})=(\text{det}A)\,\ww{\xi}{\eta}=Q(\xi,\eta)dV.
$$
Thus $\text{SL}(4,\RR)$ preserves $Q$; in fact it acts transitively on $Q$-positive $3$-planes. Indeed, $\text{GL}^+(4,\RR)$ acts transitively on such planes, since every $Q$-positive $3$-plane equals $\Ls{h}{+}$ for some unique oriented conformal class $[h]$ (see \cite[p.~8]{Donaldson}); after multiplying any such $A\in\text{GL}^+(4,\RR)$ by a positive scalar, we may arrange that $\det A=1$, and this scalar multiplication does not change the image of a $3$-plane in $\Lambda^2$. Thus, if we fix a Riemannian metric $h_0$ with Hodge splitting $\Ls{0}{+}\oplus\Ls{0}{-}$, then the question of whether $\Rmr(\Ls{h}{+},\Ls{h}{-})=0$ holds for some $[h]$ is equivalent to whether there exists $A \in \mathrm{SL}(4,\RR)$ such that $\Rmr(A\Ls{0}{+},A\Ls{0}{-})=0$. As we now show, this pointwise algebraic question can be answered completely, by giving the following exact existence criterion for $\Rmr(\Ls{h}{+},\Ls{h}{-})=0$. Indeed, because $Q$ has signature $(3,3)$, note that $\cow$ need not be real-diagonalizable (i.e., diagonalizable as an endomorphism of the real vector space $\Lambda^2(T_pM)$). It turns out that this is precisely the algebraic obstruction to $\Rmr(\Ls{h}{+},\Ls{h}{-})=0$.

\begin{prop}
\label{thm:diagonalizable}
The $Q$-self-adjoint map $\cow$ commutes with the Hodge star $\hsh$ of some Riemannian metric $h$ on $T_pM$ if and only if $\cow$ is real-diagonalizable.
\end{prop}

\begin{proof}
Suppose first that $\cow \hsh=\hsh\cow$ for the Hodge star $\hsh$ of a Riemannian metric $h$ on $T_pM$.  Then $q_{\scalebox{0.6}{$h$}}(\xi,\eta)\defeq Q(\xi,\hsh\eta)$
is positive-definite, and
$$
q_{\scalebox{0.6}{$h$}}(\cow\xi,\eta)=Q(\cow\xi,\hsh\eta)=Q(\xi,\cow\!\hsh\eta)=Q(\xi,\hsh\cow\eta)=q_{\scalebox{0.6}{$h$}}(\xi,\cow\eta).
$$
Thus $\cow$ is self-adjoint with respect to the positive-definite inner \mbox{product} $q_{\scalebox{0.6}{$h$}}$, hence is real-diagonalizable. Conversely, assume that $\cow$ is real-diagonalizable, and write
$
\Lambda^2(T_pM)=\oplus_{\lambda}E_{\lambda}
$
for the direct sum of its real eigenspaces.  If $\lambda\neq\mu$, then $E_\lambda$ and $E_\mu$ are $Q$-orthogonal, by the standard argument; it follows that each $E_\lambda$ is $Q$-nondegenerate.  Now, for each $\lambda$, let $E_\lambda^{\pm}$ denote a choice of maximal $Q$-positive and -negative subspaces of $E_\lambda$ satisfying $E_\lambda = E_\lambda^+\oplus E_\lambda^-$; in particular, $E_\lambda^-= (E_\lambda^+)^{\perp_{Q|_{\scalebox{0.45}{$E_\lambda$}}}}$. Define an involution $J_\lambda$ on each $(E_\lambda,Q|_{E_\lambda})$ by
$$
J_\lambda\big|_{E_\lambda^+}\defeq+1\comma J_\lambda\big|_{E_\lambda^-}\defeq-1,
$$
and set \mbox{$J\defeq\oplus_\lambda J_\lambda$.}  Then $J$ is an involution on $\Lambda^2(T_pM)$. Furthermore, $J$ is $Q$-self-adjoint because the eigenspace splitting $\Lambda^2=J^+\oplus J^-$ is $Q$-orthogonal by construction, with each $J^{\pm}$ $3$-dimensional because $Q$ has signature $(3,3)$. Moreover, $J$ preserves each eigenspace of $\cow$ by construction, so \mbox{$\cow J=J\cow$.} We now recall that every $Q$-positive 3-plane equals $\Ls{h}{+}$ for some unique conformal class $[h]$. Applying this to $J^+$ gives an oriented conformal class $[h]$ with $\Ls{h}{+}=J^+$. Since both $J^-$ and $\Ls{h}{-}$ are the $Q$-orthogonal complement of $J^+$, they agree as well. Thus, for any representative of $[h]$, its Hodge involution satisfies $\hsh=J$.
\end{proof}

Thus, at a fixed point, if any $E_\lambda$ is $Q$-indefinite, then there are infinitely many compatible oriented conformal classes $[h]$ satisfying \mbox{$\Rmr(\Ls{h}{+},\Ls{h}{-})=0$;} otherwise the compatible conformal class is unique. The more important question is what topological obstructions arise from this.

\begin{cor}[Euler obstruction]
\label{cor:generalchi}
Let $(M,g)$ be a compact oriented Riemannian $4$-manifold.  If there exists a Riemannian metric $h$ such that
\emph{$\Rmr(\Ls{h}{+},\Ls{h}{-})=0$},
then $\chi(M)\ge0$.  Equality holds if and only if $g$ is flat.
\end{cor}

\begin{proof}
By Proposition \ref{prop:AJ}, $\hsh$ commutes with $\cow$. Because $\cow = \hsr\co$, and given the usual decompositions
\beqa
\label{eqn:Rg-}
\co=-\!
\begin{bmatrix}
W^+_{\scalebox{0.6}{$g$}}+\dfrac{\text{scal}_{\scalebox{0.6}{$g$}}}{12}I & \text{Ric}_{\scalebox{0.6}{$0$}}\\[4pt]
\text{Ric}_{\scalebox{0.6}{$0$}}^{t} & W^-_{\scalebox{0.6}{$g$}}+\dfrac{\text{scal}_{\scalebox{0.6}{$g$}}}{12}I
\end{bmatrix} \comma \hsr = \begin{bmatrix}I&O\\O&-I\end{bmatrix}
\eeqa
with respect to the $\hsr$-splitting $\Lambda^2=\Ls{g}{+}\oplus\Ls{g}{-}$, we have
$$
\mathrm{tr}(\cow^2) = |W^+_{\scalebox{0.6}{$g$}}|^2 + |W^-_{\scalebox{0.6}{$g$}}|^2 - \frac{1}{2}|\text{Ric}_{\scalebox{0.6}{$0$}}|^2 + \frac{\text{scal}^2_{\scalebox{0.6}{$g$}}}{24}\cdot
$$
This is precisely the Chern--Gauss--Bonnet integrand, so that
$$
\chi(M)=\frac{1}{8\pi^2}\int_M \mathrm{tr}(\cow^2)\,\dVg.
$$
Since $\cow$ is real-diagonalizable by Proposition \ref{thm:diagonalizable},
$
\mathrm{tr}(\cow^2)=\sum_{i=1}^6\lambda_i^2\geq0,
$
so that $\chi(M)\ge0$.  If equality holds, then each $\lambda_i=0$, hence $\cow=0$, hence $\Rmr=0$ and $g$ is flat.  The converse is immediate.  
\end{proof}

\begin{thm}[Relative Hitchin--Thorpe inequality]
\label{thm:relativeHT}
Let $(M,g)$ be a compact oriented Riemannian $4$-manifold, and let $h$ be a Riemannian metric satisfying \emph{$\Rmr(\Ls{h}{+},\Ls{h}{-})=0$}. Set $q_{\scalebox{0.6}{$h$}}(\xi,\eta)\defeq Q(\xi,\hsh\eta)$ and 
$$
\rhoz(g,h)
\defeq
\sup_M\left\|\frac12\left(\hsr+\hsh \hsr\hsh\right)\right\|_{\text{$\qh$}}\cdot
$$
Then $\rhoz(g,h) \geq1$ and
\beqa
\label{eqn:relativeHT}
2\chi(M)\ge \frac{3}{\rhoz(g,h)}|\tau(M)|.
\eeqa
Furthermore, $\rhoz(g,h)=1$ if and only if $h \in [g]$, in which case the inequality reduces to the classical Hitchin--Thorpe inequality.
\end{thm}

\begin{proof}
By Proposition \ref{prop:AJ}, $\cow\hsh=\hsh\cow$.  By Proposition \ref{thm:diagonalizable}, $\cow$ is \mbox{$q_{\scalebox{0.6}{$h$}}$-self-adjoint,} hence real-diagonalizable; thus $\cow^2$ is $q_{\scalebox{0.6}{$h$}}$-self-adjoint and positive-semidefinite. However, $g$'s Hodge star $\hsr$ need not be $q_{\scalebox{0.6}{$h$}}$-self-adjoint; indeed, using that $\hsr$ is $Q$-self-adjoint, we have
$$
q_{\scalebox{0.6}{$h$}}(\hsr\xi,\eta)
=Q(\hsr\xi,\hsh\eta)
=Q(\xi,\hsr\!\hsh\!\eta)
=q_{\scalebox{0.6}{$h$}}(\xi,\hsh\!\hsr\!\hsh\eta),
$$
from which it follows that the $q_{\scalebox{0.6}{$h$}}$-self-adjoint part of $\hsr$ is
$$
H_{\scalebox{0.6}{$g,h$}}
\defeq
\frac{1}{2}\left(\hsr+\hsh\!\hsr\!\hsh\right).
$$
Because $\cow\hsh=\hsh\cow$,
$$
\mathrm{tr}(\hsh\!\hsr\!\hsh\cow^2)=\mathrm{tr}(\hsr\!\hsh\!\cow^2\hsh)=\mathrm{tr}(\hsr\!\hsh\!\hsh\cow^2)=\mathrm{tr}(\hsr\cow^2),
$$
hence
$
\mathrm{tr}(H_{\scalebox{0.6}{$g,h$}} \cow^2)=\mathrm{tr}(\hsr \cow^2).
$
Choose a $q_{\scalebox{0.6}{$h$}}$-orthonormal eigenbasis $\{\xi_i\}_{i=1}^6$ for $\cow^2$, say $\cow^2 \xi_i=\mu_i \xi_i$ with each $\mu_i\ge0$. With respect to this basis,
$$
\cow^2 = \mathrm{diag}(\mu_1,\dots,\mu_6) \comma H_{\scalebox{0.6}{$g,h$}} =H_{\scalebox{0.6}{$g,h$}}^t= (q_{\scalebox{0.6}{$h$}}(H_{\scalebox{0.6}{$g,h$}}\xi_i,\xi_j)).
$$
In particular, each $|q_{\scalebox{0.6}{$h$}}(H_{\scalebox{0.6}{$g,h$}}\xi_i,\xi_i)|\leq \|H_{\scalebox{0.6}{$g,h$}}\|_{\text{$\qh$}}\leq \rhoz(g,h)$, so that
$$
\big|\mathrm{tr}(H_{\scalebox{0.6}{$g,h$}} \cow^2)\big|=\left|\sum_{i=1}^6 \mu_i\,q_{\scalebox{0.6}{$h$}}(H_{\scalebox{0.6}{$g,h$}}\xi_i,\xi_i)\right|\leq \rhoz(g,h)\sum_{i=1}^6\mu_i=\rhoz(g,h)\,\mathrm{tr}(\cow^2).
$$
Thus
$
\big|\mathrm{tr}(\hsr\cow^2)\big|
\leq\rhoz(g,h)\,\mathrm{tr}(\cow^2).
$
Finally, using the Chern--Gauss--Bonnet and signature formulas in the form
$$
\chi(M)=\frac{1}{8\pi^2}\int_M\mathrm{tr}(\cow^2)\,\dVg \comma 
\tau(M)=\frac{1}{12\pi^2}\int_M\mathrm{tr}(\hsr\cow^2)\,\dVg
$$
(the latter because $\mathrm{tr}(\hsr\cow^2) = \mathrm{tr}(\hsr\co^2)=|W^+_{\scalebox{0.6}{$g$}}|^2 - |W^-_{\scalebox{0.6}{$g$}}|^2$), we obtain
$$
|\tau(M)|
\le
\frac{\rhoz(g,h)}{12\pi^2}\int_M\mathrm{tr}(\cow^2)\,\dVg
=
\frac{2\rhoz(g,h)}{3}\chi(M).
$$
If $h\in[g]$, then $\hsh=\hsr$ and hence
$
\frac{1}{2}(\hsr+\hsh\hsr\hsh)=\hsr.
$
Since $\hsr$ is an isometry of $q_{\scalebox{0.6}{$g$}}$, one has $\rhoz=1$.  This gives Hitchin--Thorpe. To complete the proof, we show that $\rhoz \geq 1$ and equal to $1$ if and only if $h \in [g]$. First, any $\hsh$-eigenbasis $\{\xi_i^+,\xi_i^-\}_{i=1}^3$ remains orthonormal with respect to $q_{\scalebox{0.6}{$h$}}$, and furthermore the latter preserves the $\hsh$-splitting $\Lambda^2 = \Ls{h}{+}\oplus \Ls{h}{-}$: 
$$
q_{\scalebox{0.6}{$h$}}(\xi^+,\xi^-) = -Q(\xi^+,\xi^-) = -Q(\xi^-,\xi^+) = -Q(\xi^-,\hsh \xi^+) = -q_{\scalebox{0.6}{$h$}}(\xi^-,\xi^+).
$$
Express $\hsh, \hsr$, and then $H_{\scalebox{0.6}{$g,h$}}$ with respect to $\{\xi_i^+,\xi_i^-\}_{i=1}^3$:
$$
\hsh = \begin{bmatrix}I&O\\O&-I\end{bmatrix} \commas \hsr = \begin{bmatrix}A&B\\C&D\end{bmatrix} \imp H_{\scalebox{0.6}{$g,h$}} = \begin{bmatrix}A&O\\O&D\end{bmatrix}\cdot
$$
Next, observe that the projection of $\Ls{g}{+} \subseteq \Lambda^2 = \Ls{h}{+}\oplus \Ls{h}{-}$ onto $\Ls{h}{+}$ is injective and hence an isomorphism of 3-planes; this allows us to ``go from $\Ls{h}{+}$ to $\Ls{h}{-}$ through $\Ls{g}{+}$"; concretely, to define a linear map $T\colon \Ls{h}{+} \lra \Ls{h}{-}$ by
$$
\xi^+ \in \Ls{h}{+} \mapsto\,\text{unique}~\xi^+ + \xi^- \in \Ls{g}{+} \mapsto \xi^- \in \Ls{h}{-}.
$$
In terms of this map, $\Ls{g}{+} = \{\xi^+ + T(\xi^+) : \xi^+ \in \Ls{h}{+}\}$. Furthermore, because $\Ls{g}{+}$ is $Q$-positive, and because $Q$ also preserves the $\hsh$-splitting $\Lambda^2=\Ls{h}{+}\oplus \Ls{h}{-}$, we have that
$$
Q(\xi^+ + T(\xi^+),\xi^+ + T(\xi^+)) = q_{\scalebox{0.6}{$h$}}(\xi^+,\xi^+) - q_{\scalebox{0.6}{$h$}}(T(\xi^+),T(\xi^+)) > 0.
$$
For any $q_{\scalebox{0.6}{$h$}}$-unit $\xi^+ \in \Ls{h}{+}$, we thus have that
$$
q_{\scalebox{0.6}{$h$}}(T(\xi^+),T(\xi^+)) < q_{\scalebox{0.6}{$h$}}(\xi^+,\xi^+) = 1 \imp \|T\|_{\scalebox{0.6}{$q_{\scalebox{0.6}{$h$}}$}} < 1,
$$
by compactness of the $q_{\scalebox{0.6}{$h$}}$-unit sphere in $\Ls{h}{+}$ and continuity. Now consider the analogous construction on $\Ls{g}{-}$; this yields a linear map $T^*\colon\Ls{h}{-} \lra \Ls{h}{+}$, in terms of which $\Ls{g}{-} = \{\xi^- + T^*(\xi^-) : \xi^-\in \Ls{h}{-}\}$. In fact $T^*$ is the $q_{\scalebox{0.6}{$h$}}$-adjoint of $T$: using that $Q$ preserves both of the $\hsr$- and $\hsh$-splittings, we have
$$
Q(\xi^++T(\xi^+),\xi^-+T^*(\xi^-))=0 \imp q_{\scalebox{0.6}{$h$}}(T(\xi^+),\xi^-) = q_{\scalebox{0.6}{$h$}}(\xi^+,T^*(\xi^-)).
$$
Furthermore, $\|T^*\|_{\scalebox{0.6}{$q_{\scalebox{0.6}{$h$}}$}} = \|T\|_{\scalebox{0.6}{$q_{\scalebox{0.6}{$h$}}$}} < 1$, and for any $q_{\scalebox{0.6}{$h$}}$-unit eigenvector $\eta \in \Ls{h}{+}$ of the $q_{\scalebox{0.6}{$h$}}$-self-adjoint composition $T^*T$,
$$
T^*T\eta = \lambda \eta \imp \lambda = q_{\scalebox{0.6}{$h$}}(T^*T\eta,\eta) = q_{\scalebox{0.6}{$h$}}(T\eta,T\eta) \leq \|T\|_{\scalebox{0.6}{$q_{\scalebox{0.6}{$h$}}$}} < 1.
$$ 
Similarly for $TT^*$. Thus the eigenvalues of both $T^*T$ and $TT^*$ lie within $[0,1)$; in particular, $I-T^*T\colon \Ls{h}{+}\lra \Ls{h}{+}$ is invertible. We now express $A$ in terms of $I\pm T^*T$. Indeed, for $\xi^+ \in \Ls{h}{+}$ and $T(\xi^+) \in \Ls{h}{-}$, consider $(\xi^{+}+T(\xi^{+})) \in \Ls{g}{+}$ and $(T(\xi^+)+T^*(T(\xi^+))) \in \Ls{g}{-}$; then
\beqa
\hsr\big((\xi^{+}+T(\xi^{+}))-(T(\xi^+)+T^*(T(\xi^+)))\big) \!\!&=&\!\! \hsr(I-T^*T)(\xi^+)\nonumber\\
&=&\!\! (\xi^{+}+T(\xi^{+}))\nonumber\\
&&\hspace{.2in}+\,\big(T(\xi^+)+T^*(T(\xi^+))\big)\nonumber\\
&=&\!\! \underbrace{(I+T^*T)(\xi^+)}_{\text{in $\Ls{h}{+}$}}+\underbrace{2T(\xi^+)}_{\text{in $\Ls{h}{-}$}}.\nonumber
\eeqa
As $(I- T^*T)(\xi^+) \in \Ls{h}{+}$, it follows that $A = (I+T^*T)(I-T^*T)^{-1}$. Similarly, $D = -(I+TT^*)(I-TT^*)^{-1}$. Thus if $\lambda$ and $\mu$ are eigenvalues of $T^*T$ and $TT^*$, then
$\frac{1+\lambda}{1-\lambda} \geq 1$ and $-\frac{1+\mu}{1-\mu} \leq -1$
are eigenvalues of $A$ and $D$, respectively. Thus $\|H_{\scalebox{0.6}{$g,h$}}\|_{\scalebox{0.6}{$q_{\scalebox{0.6}{$h$}}$}} \geq 1$, hence the supremum $\rhoz(g,h) \geq 1$, with equality if and only if each $\lambda=\mu=0$; i.e., if and only if $T=T^*=0$, in which case $\Ls{g}{+} = \Ls{h}{+}$, hence $h \in [g]$.
\end{proof}

When $\hsh=\hsr$, the classical Hitchin--Thorpe inequality is recovered.  But as the $g$- and $h$-Hodge splittings separate, $\rhoz(g,h)$ may increase, weakening the estimate. We will show in Theorem \ref{thm:CP2example} that this may indeed happen.

\section{Examples}
\label{sec:examples}
We now record two families of pairs $(g,h)$ satisfying $\Rmr(\Ls{h}{+},\Ls{h}{-})=0$; both families have the property that $g$ and $h$ share orthogonal frames. A third example, also within our shared-orthogonal-frame ansatz, will follow afterwards in Section \ref{sec:cp2}.

\begin{prop}
\label{prop:product}
Let $(\Sigma_1,g_1)$ and $(\Sigma_2,g_2)$ be oriented closed Riemannian surfaces with Gauss curvatures $K_1$ and $K_2$, respectively. Assume that $K_1$ and $K_2$ are nowhere zero and have the same sign. Set
$$
(M,g)=(\Sigma_1\times\Sigma_2,g_1\oplus g_2).
$$
For positive smooth functions $\alpha,\beta$ on $M$, define the Riemannian metric
$$
h\defeq \alpha^2g_1\oplus\beta^2g_2.
$$
Then \emph{$\Rmr(\Ls{h}{+},\Ls{h}{-})=0$} if and only if
$$
\frac{K_1}{\alpha^4}=\frac{K_2}{\beta^4}\cdot
$$
Moreover, if $K_1\neq K_2$, then $g$ is not Einstein.
\end{prop}

\begin{proof}
Let $\{e_1,e_2\}$ be an oriented local $g_1$-orthonormal frame on $\Sigma_1$ and $\{e_3,e_4\}$ an oriented local $g_2$-orthonormal frame on $\Sigma_2$. Then $\{e_1,e_2,e_3,e_4\}$ is a local $g$-orthonormal frame on $M$. As the Levi-Civita connection of a Riemannian product splits, the only nonzero sectional curvatures of $g$ are
$$
\gsec(e_1\wedge e_2)=K_1\comma \gsec(e_3\wedge e_4)=K_2,
$$
and every mixed $2$-plane has sectional curvature $0$. Now define the $h$-orthonormal frame $\{\tilde e_1,\tilde e_2,\tilde e_3,\tilde e_4\}\defeq\{\alpha^{-1}e_1,\alpha^{-1}e_2,\beta^{-1}e_3,\beta^{-1}e_4\}$. With respect to the $h$-orthonormal basis $\{\ww{\tilde e_i}{\tilde e_j}\}$, $\coh$ takes the form
$$
\coh=-\text{diag}\!\left(\frac{K_1}{\alpha^4},0,0,\frac{K_2}{\beta^4},0,0\right),
$$
from which $\hsh\coh=\coh\hsh$ follows if and only if $\frac{K_1}{\alpha^4}=\frac{K_2}{\beta^4}$ (regarding the minus sign in $\coh$, recall our sign conventions in \eqref{eqn:co} and \eqref{eqn:coh2}). Finally, a product metric $g_1\oplus g_2$ is Einstein if and only if the two Ricci curvatures agree, which for surfaces means exactly $K_1=K_2$.
\end{proof}

For instance, one may take $\alpha=|K_1|^{1/4}/c$ and $\beta=|K_2|^{1/4}/c$ for any constant $c > 0$. Thus Proposition \ref{prop:product} immediately yields pairs $(g,h)$ on $\Sph^2\times\Sph^2$ with differently scaled round factors, as well as on $\Sigma_{\scalebox{0.6}{$g$}}\times\Sigma_{\scalebox{0.6}{$h$}}$ for closed hyperbolic surfaces of genera $g,h\ge2$ equipped with differently scaled hyperbolic metrics. Our next example is from among the warped product family.

\begin{prop}
\label{prop:warped}
Let $f\colon[0,\pi]\lra\RR$ be positive on $(0,\pi)$ and suppose that $f$ is a smooth warping function at the two poles; equivalently, near $0$ one has
$
f(t)=t+O(t^3)
$
with only odd powers in its Taylor expansion, and near $\pi$ one has the analogous expansion in the variable $\pi-t$, with leading term $\pi-t$.  On $(0,\pi)\times\Sph^3$, set
$$
g\defeq dt^2+f(t)^2g_{\scalebox{0.5}{$\Sph^3$}},
$$
and define
$$
A(t)\defeq-\frac{f''(t)}{f(t)}\comma B(t)\defeq \frac{1-(f'(t))^2}{f(t)^2}\cdot
$$
Assume $A(t)>0$ and $B(t)>0$ on $(0,\pi)$, and assume in addition that, after the arclength reparametrization
$
ds=\sqrt{\frac{A}{B}}\,dt,
$
the warping function $f(t(s))$ satisfies the same smooth oddness conditions at the two endpoints.  Then
$$
h\defeq\frac{A}{B}\,dt^2+f(t)^2g_{\scalebox{0.5}{$\Sph^3$}}
$$
extends smoothly to a Riemannian metric on $\Sph^4$, and \emph{$\Rmr(\Ls{h}{+},\Ls{h}{-})=0$}.  Moreover, $g$ is Einstein if and only if $A=B$, in which case $g$ is the round metric.
\end{prop}

\begin{proof}
The smoothness hypothesis on $f$ is the standard one for the warped product $dt^2+f^2g_{\scalebox{0.5}{$\Sph^3$}}$ (see, e.g., \cite[Sec.~1.4.4]{PP}), so $g$ extends smoothly over the two collapsed $\Sph^3$-orbits.  The additional arclength condition is precisely the same criterion applied to the auxiliary metric $h$: in the $s$-coordinate, $h=ds^2+f(t(s))^2g_{\scalebox{0.5}{$\Sph^3$}}$. Let $\{\bar e_2,\bar e_3,\bar e_4\}$ be a local orthonormal frame for $g_{\scalebox{0.5}{$\Sph^3$}}$ and set
$$
e_1\defeq\partial_t\comma  e_i\defeq f^{-1}\bar e_i\comma i=2,3,4.
$$
With respect to this $g$-orthonormal frame, the only sectional-curvature types are
$$
\gsec(e_1\wedge e_i)=A\comma
\gsec(e_i\wedge e_j)=B\comma i,j=2,3,4.
$$
With respect to $\{e_i\wedge e_j\}$, we thus have
$$
\co=-\text{diag}(A,A,A,B,B,B).
$$
In the corresponding $h$-orthonormal frame
$$
\tilde e_1\defeq\sqrt{\frac{B}{A}}e_1\comma
\tilde e_i\defeq e_i\comma i=2,3,4,
$$
one has
$
\Rmr(\tilde e_1,e_i,e_i,\tilde e_1)=\Rmr(e_i,e_j,e_j,e_i)=B,
$
and hence
$$
\coh=-\text{diag}(B,B,B,B,B,B).
$$
Therefore $\hsh\coh=\coh\hsh$.  Finally, $g$ is Einstein exactly when $A=B$; in the smooth rotationally symmetric $\Sph^4$ ansatz this is the round case.  
\end{proof}

For example, for the choice $f(t)\defeq\sin t\,(1+\vep\sin^2 t)$ with $0<\vep<\frac16$, all the conditions in Proposition \ref{prop:warped} are satisfied, with
$$
A(t)=\frac{1-6\vep+9\vep\sin^2 t}{1+\vep\sin^2 t}\commass B(t)=\frac{1-6\vep+(6\vep-9\vep^2)\sin^2 t+9\vep^2\sin^4 t}{(1+\vep\sin^2 t)^2}\cdot
$$
For this explicit family, $A/B$ is a smooth even function of the polar variable with value $1$ at the pole; hence $s(t)$ is an odd smooth function with $s'(0)=1$, and the inverse $t(s)$ is odd. Therefore $f(t(s))$ has the required odd expansion, so the smoothness criterion for $h$ is satisfied. Because $A\neq B$, the metric $g$ is not Einstein. Note that Propositions \ref{prop:product} and \ref{prop:warped} are both of shared-orthogonal-frame type. Such pairs $(g,h)$ are restricted by the following fact:

\begin{lemma}
\label{rem:sharedframe}
Let $\{e_1,e_2,e_3,e_4\} \subseteq T_pM$ be a $g$-orthonormal basis such that, with respect to the basis $\{\ww{e_i}{e_j}\} \subseteq \Lambda^2(T_pM)$, $\co$ has the matrix
$$
\co=\begin{bmatrix}A&B\\B&C\end{bmatrix}\commas \left\{\begin{array}{lcl}
A&=&\operatorname{diag}(a_1,a_2,a_3),\\
B&=&\operatorname{diag}(b_1,b_2,b_3),\\
C&=&\operatorname{diag}(c_1,c_2,c_3).\end{array}\right.
$$
If there is a second Riemannian metric $h$ for which $\{e_1,e_2,e_3,e_4\}$ is $h$-orthogonal, say $h(e_i,e_j)=x_i^{-1}\delta_{ij}$ with $x_i>0$, then \mbox{$\hsh\coh=\coh\hsh$} reduces to
$$
x_1x_2a_1=x_3x_4c_1\commas x_1x_3a_2=x_2x_4c_2\commas x_1x_4a_3=x_2x_3c_3.
$$
Hence, such an $h$ exists if and only if each complementary pair has the same sign, or both vanish.
\end{lemma}

\begin{proof}
This is a straightforward computation.
\end{proof}
In other words, within the shared-orthogonal-frame ansatz, a Riemannian auxiliary metric $h$ may rebalance the magnitudes of $\hsr$-opposite sectional curvatures, but not flip their signs.  This restriction is important; it implies $\chi\ge0$ but as we shall see, it does not imply Hitchin--Thorpe.

\section{A Hitchin--Thorpe-violating example}
\label{sec:cp2}
Although Corollary \ref{cor:generalchi} already gives $\chi(M)\ge0$ without any shared-frame hypothesis, the shared-frame formulas are useful because they show explicitly why Hitchin--Thorpe need not follow. Before proving our main result (Theorem \ref{thm:CP2example} below), let us demonstrate this now.

\begin{lemma}
\label{lemma:sharedHT}
Let $(M,g)$ be a compact oriented $4$-manifold whose curvature operator has the form
$$
\co=\begin{bmatrix}A&B\\B&C\end{bmatrix}\commas \left\{\begin{array}{lcl}
A&=&\operatorname{diag}(a_1,a_2,a_3),\\
B&=&\operatorname{diag}(b_1,b_2,b_3),\\
C&=&\operatorname{diag}(c_1,c_2,c_3)\end{array}\right.
$$
in a local oriented $\ipr{\,}{}$-orthonormal basis \eqref{eqn:basis}.  Then
\beqa
\label{eqn:abc}
2\chi(M)\pm3\tau(M)=\frac{1}{2\pi^2}\int_M\sum_{i=1}^3(a_i\pm b_i)(c_i\pm b_i)\,\dVg.
\eeqa
\end{lemma}

\begin{proof}
The Euler formula is the Pfaffian computation already used in the proof of Corollary \ref{cor:generalchi}; when written in the basis above, it is
\beqa
\label{eqn:Euler_int}
\chi(M)=\frac{1}{4\pi^2}\int_M\sum_{i=1}^3(a_ic_i+b_i^2)\,\dVg.
\eeqa
The signature formula also appeared in the proof of Theorem \ref{thm:relativeHT}:
$$
\tau(M)=\frac{1}{12\pi^2}\int_M\big(|W^+_{\scalebox{0.6}{$g$}}|^2 - |W^-_{\scalebox{0.6}{$g$}}|^2\big)\dVg.
$$
In the self-dual/anti-self-dual eigenbasis determined by the $\hsr$-splitting \mbox{$\Lambda^2=\Ls{g}{+}\oplus\Ls{g}{-}$,} the diagonal entries of the two Weyl blocks differ by the terms $b_i(a_i+c_i)$, and the algebraic Bianchi identity gives $b_1+b_2+b_3=0$.  A direct substitution gives
$$
\tau(M)=\frac{1}{6\pi^2}\int_M\sum_i b_i(a_i+c_i)\dVg.
$$
Combining the two formulas gives the displayed expressions for $2\chi\pm3\tau$.  
\end{proof}

In the shared-frame setting of Lemma \ref{rem:sharedframe}, $\Rmr(\Ls{h}{+},\Ls{h}{-})=0$ forces \mbox{$a_ic_i\ge0$} for each $i$, and therefore the Euler integrand \eqref{eqn:Euler_int} is nonnegative.  But \eqref{eqn:abc} shows that the Hitchin--Thorpe integrands need not be nonnegative.  For instance, the algebraic data
$$
\left\{\begin{array}{lcl}
(a_1,c_1,b_1)\!\!&=&\!\!(4,1,2),\\
(a_2,c_2,b_2)\!\!&=&\!\!(-2,-1/2,-1),\\
(a_3,c_3,b_3)\!\!&=&\!\!(-2,-1/2,-1)\end{array}\right.
$$
satisfy $b_1+b_2+b_3=0$ and $a_ic_i>0$ for each $i$, yet
$$
\sum_i(a_ic_i+b_i^2)=12
\comma
\sum_i b_i(a_i+c_i)=15.
$$
Thus the shared-frame condition is compatible, already pointwise, with one Hitchin--Thorpe density having the wrong sign.  We now realize this phenomenon globally, by constructing $\Rmr(\Ls{h}{+},\Ls{h}{-})=0$ on a manifold that admits no Einstein metric.  We will need the following function, the proof of whose existence is long and technical, but needed for our main result (Theorem \ref{thm:CP2example}).

\begin{prop}
\label{prop:profilesmoothing}
Set $s=9/10$.  There is a smooth function \mbox{$p\colon[0,s]\lra[0,\infty)$} with the following properties.  For $t \in[0,s]$, if
$$
a(t)\defeq\sin s-\int_t^s p(u)\,du
\comma
G(t)\defeq\frac{\sin^3t}{a(t)^3\cos t},
$$
then $0\leq p(t)\leq\cos t$, $p(t)=\cos t$ near $s$, $p(t)=\lambda t+O(t^3)$ near $0$ for some $\lambda>0$,
and there is a unique $r\in(0.28,0.30)$ such that
$$
p'(t)>0\text{ and }p(t)>G(t)\text{ for }0<t<r,
$$
while
$$
p'(t)<0\text{ and }p(t)<G(t)\text{ for }r<t<s.
$$
Moreover, $p(r)=G(r)$, $p'(r)=0$,
and the quotient $(p-G)/p'$ extends smoothly and positively across $t=r$.
\end{prop}

\begin{proof}
We first construct a piecewise smooth model and then show that it can be smoothed without changing the sign information.  Set $s=9/10$ and define
\beqa
\label{eqn:F}
F(x)\defeq \cos x-\frac{s-x}{2}\sin x-
\frac{1}{\cos x\left(1+\frac{(s-x)^2}{6}\right)^{\!3}}\cdot
\eeqa
Because $F(0.28)>0$ and $F(0.30)<0$, there exists $r\in(0.28,0.30)$ satisfying
\beqa
\label{eqn:profile_r_equation}
\underbrace{\,\cos r-\frac{s-r}{2}\sin r\,}_{\text{``$P$"}}=
\frac{1}{\cos r\left(1+\frac{(s-r)^2}{6}\right)^{\!3}}\cdot
\eeqa
On the right-hand interval $[r,s]$, define
$$
p_{\scalebox{0.4}{$R$}}(t)\defeq
\cos t-\frac{1}{2}\frac{(s-t)^2}{s-r}\sin r.
$$
Then $p_{\scalebox{0.4}{$R$}}(r)=P, p'_{\scalebox{0.4}{$R$}}(r)=0, p_{\scalebox{0.4}{$R$}}(t)\leq\cos t$, and $ p'_{\scalebox{0.4}{$R$}}(t)<0$ for all $t\in (r,s]$. If $a(t)=\sin s-\int_t^s p_{\scalebox{0.4}{$R$}}(u)\,du$ on $[r,s]$, then
$
a(r)=\sin r\big(1+\frac{(s-r)^2}{6}\big) > 0.
$
Thus \eqref{eqn:profile_r_equation} is precisely the equality $P=G(r)$. On the left-hand interval $[0,r]$, define
$$
p_{\scalebox{0.4}{$L$}}(t)\defeq P\phi_0(t/r)\comma
\phi_0(x)\defeq 2x-x^2.
$$
Then for $x=t/r$ and $a(t)=\sin s-\int_t^s p_{\scalebox{0.4}{$L$}}(u)\,du$ on $[0,r]$, one has
$$
a(rx)=a(r)\big(1-\kappa I(x)\big)
\commas
\kappa\defeq\frac{Pr}{a(r)} < \frac{9}{10}
\commas
I(x)=\frac23-x^2+\frac{x^3}{3}\cdot
$$
To prove that $p_{\scalebox{0.4}{$L$}}>G$ on $(0,r)$, it is enough to prove, using $P=G(r)$ and the fact that $\frac{\sin^3(rx)}{(rx)^3\cos(rx)}$ is increasing for $x \in (0,1)$, that
$$
(2x-x^2)\left(1-\frac9{10}\left(\frac23-x^2+\frac{x^3}{3}\right)\right)^{\!3}>x^3
\comma 0<x<1,
$$
which is verified by inspection.
Therefore $p_{\scalebox{0.4}{$L$}}>G$ on $(0,r)$.  Since $p'_{\scalebox{0.4}{$L$}}>0$ on $(0,r)$, the desired sign pattern holds on the left.  On the right, $p_{\scalebox{0.4}{$R$}}'<0$.  Since $p_{\scalebox{0.4}{$R$}}\leq\cos t$, we have $a\geq\sin t$ and $a'\leq \cos t$, and hence
\beqa
\label{eqn:G'}
\frac{G'}G=3\cot t-3\frac{a'}a+\tan t\geq\tan t>0\comma r\leq t \leq s.
\eeqa
Thus $G$ is increasing while $p_{\scalebox{0.4}{$R$}}$ is decreasing.  Since $p_{\scalebox{0.4}{$R$}}(r)=G(r)$, we obtain $p_{\scalebox{0.4}{$R$}}<G$ on $(r,s)$. To finish the proof, it remains to replace the piecewise model by a smooth profile. Let $p^{\scalebox{0.6}{$\mathrm{mod}$}}$ denote the piecewise model just constructed, and set
$$
a^{\scalebox{0.6}{$\mathrm{mod}$}}(t)\defeq\sin s-\int_t^s p^{\scalebox{0.6}{$\mathrm{mod}$}}(u)\,du .
$$
Since $a^{\scalebox{0.6}{$\mathrm{mod}$}}(0)>0$, choose $0<\varepsilon<a^{\scalebox{0.6}{$\mathrm{mod}$}}(0)/2$. After taking all subsequent smoothing parameters sufficiently small, every profile considered below remains in the set
$$
\mathcal P_\varepsilon \defeq \left\{p\in C^\infty([0,s]) : p\ge0,\ \int_0^s p(u)\,du<\sin s-\varepsilon\right\}\cdot
$$
What is more, the map
$$
p\in\mathcal P_\varepsilon \longmapsto a_p(t)\defeq\sin s-\int_t^s p(u)\,du
$$
is continuous in the \(C^0\)-topology; indeed, for $p,q \in P_{\scalebox{0.6}{$\vep$}}$, 
$$
|a_p(t)-a_q(t)| \leq \int_t^s|p(u)-q(u)|\,du \leq \|p-q\|_{\scalebox{0.6}{$C^0$}}(s-t) \leq s\|p-q\|_{\scalebox{0.6}{$C^0$}}, 
$$
so that $\|a_p-a_q\|_{\scalebox{0.6}{$C^0$}} \leq s\|p-q\|_{\scalebox{0.6}{$C^0$}}$. Furthermore, $a_p$ stays bounded away from zero on $P_{\scalebox{0.6}{$\vep$}}$ because $a_p(t)\geq a_p(0) > \vep$. Because of this, and because $\cos t\geq\cos s>0$ on $[0,s]$, the corresponding function
$$
p\in P_{\scalebox{0.6}{$\vep$}} \mapsto G_p(t)\defeq\frac{\sin^3t}{a_p(t)^3\cos t}
$$
is also continuous in the $C^0$-topology.  We show the estimate, since it will be used below.  If $p,q\in P_{\scalebox{0.6}{$\vep$}}$, then $a_p(t),a_q(t)\geq\vep$ for all $t\in[0,s]$.  Applying the mean value theorem to the function $x\mapsto x^{-3}$ at the points $a_p(t),a_q(t)$ gives
$$
\big|a_p(t)^{-3}-a_q(t)^{-3}\big|
\leq 3\vep^{-4}|a_p(t)-a_q(t)|.
$$
Therefore
\begin{align*}
|G_p(t)-G_q(t)|
&=\frac{\sin^3t}{\cos t}\,\big|a_p(t)^{-3}-a_q(t)^{-3}\big|\\
&\leq \frac{\sin^3s}{\cos s}\,3\vep^{-4}|a_p(t)-a_q(t)|\\
&\leq \frac{3s\sin^3s}{\vep^4\cos s}\,
\|p-q\|_{\scalebox{0.6}{$C^0$}},
\end{align*}
which implies that
\begin{equation}
\label{eqn:G-continuity-explicit}
\|G_p-G_q\|_{\scalebox{0.6}{$C^0$}}
\leq
\frac{3s\sin^3s}{\vep^4\cos s}\,
\|p-q\|_{\scalebox{0.6}{$C^0$}}.
\end{equation}
holds.  The same estimate applies to the piecewise model profiles used below, since the proof of the estimate only uses the lower bound $a\geq\vep$ and not smoothness.  In particular, inequalities involving $p-G_p$ are controlled by $C^0$-smallness of $p$ together with \eqref{eqn:G-continuity-explicit}, whereas inequalities involving $p'$ require a separate $C^0$-estimate for $p'$. We now smooth at the three points $0,r$, and $s$.  Although we can quote a standard mollification procedure (see, e.g., \cite[Appendix~C.5,~Theorem~7]{evans}), we give a full proof here as our argument requires preserving the sign pattern at the matching point. We first smooth at $0$.  Recall that
$$
p_{\scalebox{0.4}{$L$}}(t)=P\phi_0(t/r)\comma\phi_0(x)= 2x-x^2.
$$
Now choose a smooth cutoff $\beta\colon[0,\infty)\lra[0,1]$ such that
$$
\beta(y)=0\quad\text{for}\quad 0\leq y\leq \frac{1}{2}\comma
\beta(y)=1\quad\text{for}\quad y\geq 1,
$$
and set
$
B_{\scalebox{0.6}{$\beta$}}\defeq \|\beta'\|_{\scalebox{0.6}{$C^0([0,\infty))$}}.
$
For $0<\eta<\frac{1}{2}$, define
$$
p_{\scalebox{0.6}{$\eta$}}(t)
\defeq P\phi_{\scalebox{0.6}{$\eta$}}(t/r)\comma \phi_{\scalebox{0.6}{$\eta$}}(x)
\defeq 2x-\beta(x/\eta)x^2.
$$
Then
$$
\phi_{\scalebox{0.6}{$\eta$}}(x)=2x
\quad\text{for}\quad0\leq x\leq \frac{\eta}{2}\comma
\phi_{\scalebox{0.6}{$\eta$}}(x)=2x-x^2
\quad\text{for}\quad x\geq \eta.
$$
Thus $p_{\scalebox{0.6}{$\eta$}}(t)=(2P/r)t$ for $0\leq t\leq r\eta/2$, which is of the form $p_{\scalebox{0.6}{$\eta$}}(t)=\lambda t+O(t^3)$ with $\lambda=2P/r>0$, as desired.  Also, since $0\leq\beta\leq1$, we have
$$
\phi_{\scalebox{0.6}{$\eta$}}(x)=2x-\beta(x/\eta)x^2
\geq 2x-x^2=\phi_0(x).
$$
In particular, $p_{\scalebox{0.6}{$\eta$}}\geq0$ on $[0,s]$.  At those points where $p_{\scalebox{0.6}{$\eta$}}\neq p_{\scalebox{0.4}{$L$}}$, namely \mbox{$0< t<r\eta$,} we have
$
p_{\scalebox{0.6}{$\eta$}}(t) = P(2(t/r)-\beta(t/(r\eta))(t/r)^2)
\leq 2P\eta.
$
After decreasing $\eta$ if necessary, \mbox{$2P\eta<\cos(r\eta)$;} since $\cos t\geq\cos(r\eta)$ on $[0,r\eta]$, this gives $p_{\scalebox{0.6}{$\eta$}}(t)\leq\cos t$ on $[0,r\eta]$.  For $t\geq r\eta$, we have $p_{\scalebox{0.6}{$\eta$}}(t)=p_{\scalebox{0.4}{$L$}}(t)\leq\cos t$, because $p_{\scalebox{0.4}{$L$}}$ is increasing on $[r\eta,r]$ and $0< \phi_0(t/r) \leq 1$ on $[r\eta,r]$, so that $p(t) \leq p_{\scalebox{0.4}{$L$}}(r)\leq P = \cos r-\frac{s-r}{2}\sin r\leq \cos r \leq \cos t$.
We now check the two inequalities
$
p_{\scalebox{0.6}{$\eta$}}>G_{\text{$p_{\scalebox{0.6}{$\eta$}}$}}$ and $p'_{\scalebox{0.6}{$\eta$}}>0$
on the whole left-hand interval $(0,r)$.  We do this in three separate regions. First, suppose $0<t\leq r\eta/2$.  Then
$$
p_{\scalebox{0.6}{$\eta$}}(t)=\frac{2P}{r}t\comma
p'_{\scalebox{0.6}{$\eta$}}(t)=\frac{2P}{r}>0.
$$
Let $a_0\defeq a_{\text{$p_{\scalebox{0.4}{$L$}}$}}(0)>0$.  Since
$
|p_{\scalebox{0.6}{$\eta$}}(t)-p_{\scalebox{0.4}{$L$}}(t)|
\leq P\eta^2$ for $0\leq t\leq r\eta
$
and the two profiles agree for $t\geq r\eta$, we have
$
|a_{\text{$p_{\scalebox{0.6}{$\eta$}}$}}(0)-a_{\text{$p_{\scalebox{0.4}{$L$}}$}}(0)|
\leq sP\eta^2.
$
Taking $\eta$ so small that $sP\eta^2<a_0/2$, we get
$
a_{\text{$p_{\scalebox{0.6}{$\eta$}}$}}(0)\geq a_0/2>0.
$
For $0<t\leq r\eta/2$, therefore,
$$
a_{\text{$p_{\scalebox{0.6}{$\eta$}}$}}(t)
=
a_{\text{$p_{\scalebox{0.6}{$\eta$}}$}}(0)+\int_0^t p_{\scalebox{0.6}{$\eta$}}(u)\,du
=
a_{\text{$p_{\scalebox{0.6}{$\eta$}}$}}(0)+\frac{P}{r}t^2.
$$
Thus
$$
G_{\text{$p_{\scalebox{0.6}{$\eta$}}$}}(t)
=
\frac{\sin^3t}{a_{\text{$p_{\scalebox{0.6}{$\eta$}}$}}(t)^3\cos t}
\leq
\frac{8}{a_0^3\cos s}t^3.
$$
After decreasing $\eta$ so that
$
\frac{8}{a_0^3\cos s}(r\eta/2)^2<\frac{2P}{r},
$
we obtain
$$
G_{\text{$p_{\scalebox{0.6}{$\eta$}}$}}(t)<\frac{2P}{r}t=p_{\scalebox{0.6}{$\eta$}}(t)
\quad\text{for }0<t\leq r\eta/2.
$$
This proves both desired inequalities on $(0,r\eta/2]$. Second, suppose \mbox{$r\eta\leq t<r$.}  Since $p_{\scalebox{0.6}{$\eta$}}=p_{\scalebox{0.4}{$L$}}$ on $[r\eta,r)$, and since the modification of $p_{\scalebox{0.4}{$L$}}$ is supported in $[0,r\eta]$, the integral in $a_p(t) = \sin s -\int_t^s p(u)du$ does not see the modification when $t\geq r\eta$.  Hence
$$
a_{\text{$p_{\scalebox{0.6}{$\eta$}}$}}(t)=a_{\text{$p_{\scalebox{0.4}{$L$}}$}}(t)\comma
G_{\text{$p_{\scalebox{0.6}{$\eta$}}$}}(t)=G_{\text{$p_{\scalebox{0.4}{$L$}}$}}(t)\comma
r\eta\leq t<r.
$$
Thus $p_{\scalebox{0.6}{$\eta$}}>G_{\text{$p_{\scalebox{0.6}{$\eta$}}$}}$ and $p'_{\scalebox{0.6}{$\eta$}}>0$ on $[r\eta,r)$, because these are the already-verified inequalities for the model function $p_{\scalebox{0.4}{$L$}}$. Third, consider the transition region
$
K_{\scalebox{0.6}{$\eta$}}\defeq [r\eta/2,r\eta].
$
On this region, write $x=t/r\in[\eta/2,\eta]$.  Since $a_{\text{$p_{\scalebox{0.4}{$L$}}$}}(0)>0$ and $f(t) \defeq \frac{P}{2r}t$ has $f'(0) = \frac{P}{2r} > G_{\text{$p_{\scalebox{0.4}{$L$}}$}}(0) = 0$, there is a number $\eta_0>0$ such that
$$
G_{\text{$p_{\scalebox{0.4}{$L$}}$}}(rx)\leq \frac{P}{2}x
\quad\text{for}\quad0<x\leq\eta_0,
$$
by the Mean Value Theorem on $G_{\text{$p_{\scalebox{0.4}{$L$}}$}} - f$. Now take $0<\eta<\eta_0$; because
$$
p_{\scalebox{0.4}{$L$}}(rx)=P(2x-x^2)\geq Px
\quad\text{for}\quad0<x<1,
$$
we get
\begin{equation}
\label{eqn:left-margin-p-minus-G}
p_{\scalebox{0.4}{$L$}}(rx)-G_{\text{$p_{\scalebox{0.4}{$L$}}$}}(rx)
\geq \frac{P}{2}x
\geq \frac{P}{4}\eta
\quad\text{for}\quad rx \in K_{\scalebox{0.6}{$\eta$}}.
\end{equation}
Likewise,
\begin{equation}
\label{eqn:left-margin-derivative}
p'_{\scalebox{0.4}{$L$}}(t)
=
\frac{2P}{r}\left(1-\frac{t}{r}\right)
\geq \frac{P}{r}
\quad\text{on}\quad K_{\scalebox{0.6}{$\eta$}},
\end{equation}
after imposing $\eta<1/2$. We now estimate how much the cutoff changes the profile.  Since
$
\phi_{\scalebox{0.6}{$\eta$}}(x)-\phi_0(x)
=
\big(1-\beta(x/\eta)\big)x^2,
$
we have
\begin{equation}
\label{eqn:left-C0-p-change}
\|p_{\scalebox{0.6}{$\eta$}}-p_{\scalebox{0.4}{$L$}}\|_{\scalebox{0.6}{$C^0(K_{\scalebox{0.6}{$\eta$}})$}}\leq P\eta^2.
\end{equation}
Also,
$
\phi'_{\scalebox{0.6}{$\eta$}}(x)-\phi_0'(x)
=
2\big(1-\beta(x/\eta)\big)x-\beta'(x/\eta)\frac{x^2}{\eta}.
$
Since $x=\frac{t}{r}\leq\eta$ on $K_{\scalebox{0.6}{$\eta$}}$, this gives
$
|\phi'_{\scalebox{0.6}{$\eta$}}(x)-\phi_0'(x)|
\leq (2+B_{\scalebox{0.6}{$\beta$}})\eta.
$
Therefore
\begin{equation}
\label{eqn:left-C0-derivative-change}
\|p'_{\scalebox{0.6}{$\eta$}}-p'_{\scalebox{0.4}{$L$}}\|_{\scalebox{0.6}{$C^0(K_{\scalebox{0.6}{$\eta$}})$}}
\leq \frac{P}{r}(2+B_{\scalebox{0.6}{$\beta$}})\eta.
\end{equation}
Finally, applying \eqref{eqn:G-continuity-explicit} with $p=p_{\scalebox{0.6}{$\eta$}}$ and $q=p_{\scalebox{0.4}{$L$}}$, and using \eqref{eqn:left-C0-p-change}, gives
\begin{equation}
\label{eqn:left-C0-G-change}
\|G_{\text{$p_{\scalebox{0.6}{$\eta$}}$}}-G_{\text{$p_{\scalebox{0.4}{$L$}}$}}\|_{\scalebox{0.6}{$C^0(K_{\scalebox{0.6}{$\eta$}})$}}
\leq
\frac{3sP\sin^3s}{\vep^4\cos s}\eta^2.
\end{equation}
Combining \eqref{eqn:left-margin-p-minus-G}, \eqref{eqn:left-C0-p-change}, and \eqref{eqn:left-C0-G-change}, we obtain, on $K_{\scalebox{0.6}{$\eta$}}$,
\begin{align*}
p_{\scalebox{0.6}{$\eta$}}-G_{\text{$p_{\scalebox{0.6}{$\eta$}}$}}
&=
\big(p_{\scalebox{0.4}{$L$}}-G_{\text{$p_{\scalebox{0.4}{$L$}}$}}\big)
+
\big(p_{\scalebox{0.6}{$\eta$}}-p_{\scalebox{0.4}{$L$}}\big)
-
\big(G_{\text{$p_{\scalebox{0.6}{$\eta$}}$}}-G_{\text{$p_{\scalebox{0.4}{$L$}}$}}\big)\\
&\geq
\frac{P}{4}\eta
-
P\eta^2
-
\frac{3sP\sin^3s}{\vep^4\cos s}\eta^2.
\end{align*}
This yields $p_{\scalebox{0.6}{$\eta$}}>G_{\text{$p_{\scalebox{0.6}{$\eta$}}$}}$ on $K_{\scalebox{0.6}{$\eta$}}$ for
$
\eta<\frac{1}{4}\big(1+\frac{3s\sin^3s}{\vep^4\cos s}\big)^{\!-1}.
$ Similarly, combining \eqref{eqn:left-margin-derivative} and \eqref{eqn:left-C0-derivative-change}, we get
$$
p'_{\scalebox{0.6}{$\eta$}}
\geq
\frac{P}{r}-\frac{P}{r}(2+B_{\scalebox{0.6}{$\beta$}})\eta.
$$
After requiring $(2+B_{\scalebox{0.6}{$\beta$}})\eta<1$, this gives $p'_{\scalebox{0.6}{$\eta$}}>0$ on $K_{\scalebox{0.6}{$\eta$}}$.  Thus we have now proved $p_{\scalebox{0.6}{$\eta$}}>G_{\text{$p_{\scalebox{0.6}{$\eta$}}$}}$ and $p'_{\scalebox{0.6}{$\eta$}}>0$ on all three regions $(0,r\eta/2]$, $[r\eta/2,r\eta]$, and $[r\eta,r)$.  Hence the left-hand sign conditions hold on the entire interval $(0,r)$.  We now modify the right-hand function $p_{\scalebox{0.4}{$R$}}$ in a similar manner. Choose a smooth cutoff \mbox{$\gamma\colon[0,\infty)\lra[0,1]$} such that
$$
\gamma(y)=0\quad\text{for}\quad 0\leq y\leq \frac12
\comma
\gamma(y)=1\quad\text{for}\quad y\geq1,
$$
and set $B_{\scalebox{0.6}{$\gamma$}}\defeq\|\gamma'\|_{\scalebox{0.6}{$C^0([0,\infty))$}}$.  For $0<\nu<s-r$, define
$$
q_{\scalebox{0.6}{$\nu$}}(t)
\defeq
\gamma\!\left(\frac{s-t}{\nu}\right)\frac{(s-t)^2}{s-r}
\commas
p_{\scalebox{0.6}{$\nu$}}(t)
\defeq
\cos t-\frac12\sin r\,q_{\scalebox{0.6}{$\nu$}}(t)
\commas r\leq t\leq s.
$$
Thus $p_{\scalebox{0.6}{$\nu$}}=p_{\scalebox{0.4}{$R$}}$ on $[r,s-\nu]$, while $p_{\scalebox{0.6}{$\nu$}}=\cos t$ on $[s-\nu/2,s]$.  Moreover, since $0\leq q_{\scalebox{0.6}{$\nu$}}\leq (s-t)^2/(s-r)$, we have $p_{\scalebox{0.4}{$R$}}\leq p_{\scalebox{0.6}{$\nu$}}\leq \cos t.$
The perturbation is $C^0$-small: it is supported in $[s-\nu,s]$, wherein $(s-t)^2/(s-r)\leq \nu^2/(s-r)$.  Hence
\beqa
\label{eqn:C0_*}
\|p_{\scalebox{0.6}{$\nu$}}-p_{\scalebox{0.4}{$R$}}\|_{\scalebox{0.6}{$C^0([r,s])$}}
\leq
\frac{\sin r}{2(s-r)}\nu^2.
\eeqa
We now retune the matching point, treating $r$ as a parameter. For each $r\in I\defeq[0.28,0.30]$, write
$$
P(r)\defeq \cos r-\frac{s-r}{2}\sin r,
$$
and define $p_{\scalebox{0.4}{$R$}}^{\,r}$ and $p_{\scalebox{0.6}{$\nu$}}^{\,r}$ by the same formulae as above, with this value of $r$. Let
$$
a_{\scalebox{0.6}{$\nu$}}^{\,r}(t)\defeq \sin s-\int_t^s p_{\scalebox{0.6}{$\nu$}}^{\,r}(u)\,du
\commas
G_{\scalebox{0.6}{$\nu$}}^{\,r}(t)\defeq
\frac{\sin^3t}{(a_{\scalebox{0.6}{$\nu$}}^{\,r}(t))^3\cos t},
$$
and set
$$
\Psi_{\scalebox{0.6}{$\nu$}}(r)\defeq P(r)-G_{\scalebox{0.6}{$\nu$}}^{\,r}(r).
$$
For the unsmoothed right profile $p_{\scalebox{0.4}{$R$}}^{\,r}$, the corresponding function $\Psi_0$ is precisely the function $F$ given by \eqref{eqn:F}, and has opposite signs at $0.28$ and $0.30$. Moreover, the estimate \eqref{eqn:C0_*} is uniform for $r\in I$, so
$$
\big\|p_{\scalebox{0.6}{$\nu$}}^{\,r}-p_{\scalebox{0.4}{$R$}}^{\,r}\big\|_{\scalebox{0.6}{$C^0([r,s])$}}
\leq C\nu^2
$$
for a constant $C$ independent of $r$. Hence
$$
\big|a_{\scalebox{0.6}{$\nu$}}^{\,r}(r)-a_{\scalebox{0.4}{$R$}}^{\,r}(r)\big|
\leq C\nu^3,
$$
and therefore $\Psi_{\scalebox{0.6}{$\nu$}}\to\Psi_0$ uniformly on $I$. After decreasing $\nu$, the signs of $\Psi_{\scalebox{0.6}{$\nu$}}$ at $0.28$ and $0.30$ are still opposite. By the Intermediate Value Theorem, there is an $r_{\scalebox{0.6}{$\nu$}}\in(0.28,0.30)$ such that $\Psi_{\scalebox{0.6}{$\nu$}}(r_{\scalebox{0.6}{$\nu$}})=0$. Rename this value $r$. The preceding left-hand estimates are uniform for $r\in I$, so after decreasing $\eta$ if necessary, the left-hand construction still gives $p_{\scalebox{0.6}{$\eta$}}>G_{\text{$p_{\scalebox{0.6}{$\eta$}}$}}$ and $p'_{\scalebox{0.6}{$\eta$}}>0$ on $(0,r)$ for this retuned value of $r$. Since the left-hand smoothing is supported near $0$, it does not affect the value of $a(r)$, and therefore the piecewise function
$$
p(t)\defeq
\begin{cases}
p_{\scalebox{0.6}{$\eta$}}(t) & 0\leq t<r,\\[2pt]
p_{\scalebox{0.6}{$\nu$}}(t) & r\leq t\leq s
\end{cases}
$$
satisfies $p(r)=G_p(r)$.

It remains to check the right-hand signs for the retuned $r$. First, $p'_{\scalebox{0.6}{$\nu$}}<0$ on $(r,s-\nu]$ because $p_{\scalebox{0.6}{$\nu$}}=p_{\scalebox{0.4}{$R$}}$ there, and $p'_{\scalebox{0.6}{$\nu$}}=-\sin t<0$ on $[s-\nu/2,s]$. On the transition region $K_{\scalebox{0.6}{$\nu$}}\defeq[s-\nu,s-\nu/2]$, write $y=(s-t)/\nu\in[\frac12,1]$. Then
$$
-q'_{\scalebox{0.6}{$\nu$}}(t)
=
\frac{\nu}{s-r}\big(2y\gamma(y)+y^2\gamma'(y)\big)
\leq
\frac{(2+B_{\scalebox{0.6}{$\gamma$}})\nu}{s-r}.
$$
Therefore, on $K_{\scalebox{0.6}{$\nu$}}$,
$$
p'_{\scalebox{0.6}{$\nu$}}(t)
= -\sin t-\frac12\sin r\,q'_{\scalebox{0.6}{$\nu$}}(t)
\leq
-\sin(s-\nu)+\frac{\sin r}{2(s-r)}(2+B_{\scalebox{0.6}{$\gamma$}})\nu,
$$
which is negative after decreasing $\nu$ if necessary. Hence $p'_{\scalebox{0.6}{$\nu$}}<0$ on $(r,s]$. Finally, for $a=a_p$ on the right-hand interval, the inequalities $0\leq p_{\scalebox{0.6}{$\nu$}}\leq\cos t$ imply $a(t)\geq\sin t$ and $a'(t)=p_{\scalebox{0.6}{$\nu$}}(t)\leq\cos t$. Thus
$$
\frac{G'_p}{G_p}
=3\cot t-3\frac{a'}a+\tan t
\geq\tan t>0
\commas r\leq t\leq s.
$$
So $G_p$ is increasing on $[r,s]$, while $p_{\scalebox{0.6}{$\nu$}}$ is decreasing there. Since $p_{\scalebox{0.6}{$\nu$}}(r)=G_p(r)$, it follows that $p_{\scalebox{0.6}{$\nu$}}<G_p$ on $(r,s]$. Thus the retuned piecewise profile satisfies the required signs on both sides of $r$.

Finally, we smooth near $r$; indeed, although $p_{\scalebox{0.6}{$\eta$}}(r)=p_{\scalebox{0.6}{$\nu$}}(r)$, the piecewise function $p$ need not be smooth at $r$. Let us rename $p$ to $p^{\scalebox{0.5}{temp}}$ and put $G^{\scalebox{0.5}{temp}}\defeq G_{\text{$p^{\scalebox{0.5}{temp}}$}}$ (note that $G^{\scalebox{0.5}{temp}}$ is continuous). By the retuning of $r$, we have
$
p^{\scalebox{0.5}{temp}}(r)=G^{\scalebox{0.5}{temp}}(r).
$
We now remove a small neighborhood of $r$ and insert a cap there.  We shall choose $\lambda_1>0$ below.  Pick a ``height" value $H$ near $G^{\scalebox{0.5}{temp}}(r)$ and set
$$
C_{\scalebox{0.5}{$H$}}(t)\defeq H-\frac{\lambda_1}{2}(t-r)^2.
$$
For $\delta>0$ small, choose nondecreasing smooth cutoffs $\chi_-$ and $\chi_+$ with values in $[0,1]$ such that $\chi_-\big|_{[0,r-\delta]}=0$ and $\chi_-\big|_{[r-\delta/3,s]}=1$, while $\chi_+\big|_{[0,r+\delta/3]}=0$ and $\chi_+\big|_{[r+\delta,s]}=1$.  Then define $p_{\scalebox{0.5}{$H$}}\colon[0,s]\lra [0,\infty)$ by
$$
p_{\scalebox{0.5}{$H$}}(t)\defeq
\begin{cases}
p_{\scalebox{0.6}{$\eta$}}(t) & 0\leq t\leq r-\delta,\\[2pt]
(1-\chi_-(t))p_{\scalebox{0.6}{$\eta$}}(t)+\chi_-(t)C_{\scalebox{0.5}{$H$}}(t) & r-\delta\leq t\leq r-\delta/3,\\[2pt]
C_{\scalebox{0.5}{$H$}}(t) & |t-r|\leq \delta/3,\\[2pt]
(1-\chi_+(t))C_{\scalebox{0.5}{$H$}}(t)+\chi_+(t)p_{\scalebox{0.6}{$\nu$}}(t) & r+\delta/3\leq t\leq r+\delta,\\[2pt]
p_{\scalebox{0.6}{$\nu$}}(t) & r+\delta\leq t\leq s.
\end{cases}
$$
The cutoffs are constant near the endpoints of the transition intervals, so $p_{\scalebox{0.5}{$H$}}$ is smooth. The only remaining choice is the height $H$. Write
$$
P\defeq p^{\scalebox{0.5}{temp}}(r)=G^{\scalebox{0.5}{temp}}(r).
$$
Since $p_{\scalebox{0.6}{$\eta$}}=p_{\scalebox{0.4}{$L$}}$ near $r$ and $p_{\scalebox{0.6}{$\nu$}}=p_{\scalebox{0.4}{$R$}}$ near $r$, there are positive constants $\alpha_-$ and $\alpha_+$ such that, for $0<y\ll1$,
$$
p_{\scalebox{0.6}{$\eta$}}(r-y)=P-\alpha_-y^2+O(y^3)
\quad\text{and}\quad
p_{\scalebox{0.6}{$\nu$}}(r+y)=P-\alpha_+y^2+O(y^3).
$$
Choose $0<\lambda_1<2\min\{\alpha_-,\alpha_+\}$. We claim that the height equation $H=G_{\text{$p_{\scalebox{0.5}{$H$}}$}}(r)$ has a solution with $|H-P|=O(\delta^3)$. Indeed, if $|H-P|\leq M\delta^3$, then $p_{\scalebox{0.5}{$H$}}-p^{\scalebox{0.5}{temp}}$ is supported in an interval of length $O(\delta)$ and is $O(\delta^2)$ there, uniformly for bounded $M$. Hence
$$
G_{\text{$p_{\scalebox{0.5}{$H$}}$}}(r)-G^{\scalebox{0.5}{temp}}(r)=O(\delta^3).
$$
Choosing $M$ larger than the implicit constant, the continuous function $\Phi(H)\defeq H-G_{\text{$p_{\scalebox{0.5}{$H$}}$}}(r)$ has opposite signs at $P-M\delta^3$ and $P+M\delta^3$. Therefore there is a height $H_0$ with
$$
H_0=G_{\text{$p_{\scalebox{0.5}{$H_0$}}$}}(r)
\quad\text{and}\quad
|H_0-P|=O(\delta^3).
$$
Let us verify that the capped profile still satisfies the bound $0\leq p\leq\cos t$.  Since $P=G^{\scalebox{0.5}{temp}}(r)>0$ and $P<\cos r$ (indeed, $P=\cos r-\frac{s-r}{2}\sin r$), the gap between $P$ and both $0$ and $\cos r$ is positive.  As $H_0=P+O(\delta^3)$ and $\cos t=\cos r+O(\delta)$ for $|t-r|\leq\delta$, after decreasing $\delta$ we have
$$
0<C_{\scalebox{0.5}{$H_0$}}(t)<\cos t
\commas |t-r|\leq\delta.
$$
The already constructed functions $p_{\scalebox{0.6}{$\eta$}}$ and $p_{\scalebox{0.6}{$\nu$}}$ satisfy $0\leq p_{\scalebox{0.6}{$\eta$}},p_{\scalebox{0.6}{$\nu$}}\leq\cos t$ on their respective domains.  Since the transition pieces defining $p_{\scalebox{0.5}{$H_0$}}$ are convex combinations of $C_{\scalebox{0.5}{$H_0$}}$ with $p_{\scalebox{0.6}{$\eta$}}$ or $p_{\scalebox{0.6}{$\nu$}}$, the final capped profile also satisfies $0\leq p_{\scalebox{0.5}{$H_0$}}(t)\leq\cos t$ on $[0,s]$.
Rename $p_{\scalebox{0.5}{$H_0$}}$ as $p$. Then, on $|t-r|\leq\delta/3$,
$$
p(t)=G_p(r)-\frac{\lambda_1}{2}(t-r)^2.
$$
In particular $p'(t)=-\lambda_1(t-r)$ on $[r-\delta/3,r+\delta/3]$. Also, after decreasing $\eta,\nu,\delta$ if necessary, we can ensure that $G_p'(r)>0$, since $G_p$ can be made $C^1$-close to $G_{\text{$p_{\scalebox{0.6}{$\nu$}}$}}$ near $r$ and $G_{\text{$p_{\scalebox{0.6}{$\nu$}}$}}'(r)>0$ by the computation above. Therefore $(p-G_p)'(r)=-G_p'(r)<0$, and, after decreasing $\delta$ once more, we obtain
$$
p>G_p\commas p'>0\quad\text{on }[r-\delta/3,r)
\commas
p<G_p\commas p'<0\quad\text{on }(r,r+\delta/3].
$$
It remains only to check the two transition intervals
$$
I_-\defeq[r-\delta,r-\delta/3]
\comma
I_+\defeq[r+\delta/3,r+\delta].
$$
On $I_-$, the choice of $\lambda_1$ gives $C_{\scalebox{0.5}{$H_0$}}-p_{\scalebox{0.6}{$\eta$}}>0$ for sufficiently small $\delta$; also $p'_{\scalebox{0.6}{$\eta$}}>0$ and $C'_{\scalebox{0.5}{$H_0$}}>0$ there. Since $\chi'_-\geq0$, the identity
$$
p'=(1-\chi_-)p'_{\scalebox{0.6}{$\eta$}}+\chi_-C'_{\scalebox{0.5}{$H_0$}}+\chi'_-(C_{\scalebox{0.5}{$H_0$}}-p_{\scalebox{0.6}{$\eta$}})
$$
shows that $p'>0$ on $I_-$. Similarly, on $I_+$ one has $p_{\scalebox{0.6}{$\nu$}}-C_{\scalebox{0.5}{$H_0$}}<0$, $p'_{\scalebox{0.6}{$\nu$}}<0$, and $C'_{\scalebox{0.5}{$H_0$}}<0$; since $\chi'_+\geq0$, the identity
$$
p'=(1-\chi_+)C'_{\scalebox{0.5}{$H_0$}}+\chi_+p'_{\scalebox{0.6}{$\nu$}}+\chi'_+(p_{\scalebox{0.6}{$\nu$}}-C_{\scalebox{0.5}{$H_0$}})
$$
shows that $p'<0$ on $I_+$. Finally, the margins $p^{\scalebox{0.5}{temp}}-G^{\scalebox{0.5}{temp}}$ on $I_-$ and \mbox{$G^{\scalebox{0.5}{temp}}-p^{\scalebox{0.5}{temp}}$} on $I_+$ are bounded below by $c\,\delta$ for some $c>0$, while \mbox{$p-p^{\scalebox{0.5}{temp}}=O(\delta^2)$} and $G_p-G^{\scalebox{0.5}{temp}}=O(\delta^3)$ there. Hence the signs of $p-G_p$ on the two transition intervals are preserved: $p-G_p>0$ on $I_-$ and $p-G_p<0$ on $I_+$. Thus the final smooth profile satisfies all the stated sign conditions. Moreover,
$$
\frac{p-G_p}{p'}=
\frac{G_p(r)-G_p(t)-\frac{\lambda_1}{2}(t-r)^2}{-\lambda_1(t-r)}
$$
extends smoothly and positively at $t=r$, with limiting value $G_p'(r)/\lambda_1$.  (By Hadamard's Lemma, $p-G_p=(t-r)h$ for some smooth function $h$, so that $\frac{p-G_p}{p'} = -\frac{h}{\lambda_1}$.) This completes the proof.
\end{proof}

\begin{prop}
\label{lemma:cp2core}
There is a smooth Riemannian pair $(g_{\scalebox{0.4}{$C$}},h_{\scalebox{0.4}{$C$}})$ satisfying \emph{$\text{Rm}_{\scalebox{0.6}{$g_{\scalebox{0.4}{$C$}}$}}(\Ls{h_{\scalebox{0.4}{$C$}}}{+},\Ls{h_{\scalebox{0.4}{$C$}}}{-})=0$} on a manifold $C\cong\CP^2\setminus B^4$ such that, near $\partial C$,
$$
g_{\scalebox{0.4}{$C$}}=h_{\scalebox{0.4}{$C$}}=dt^2+\sin^2t\,g_{\scalebox{0.5}{$\Sph^3$}}
$$
for $t$ near $s=9/10$.
\end{prop}

\begin{proof}
Let $p$ be as in Proposition \ref{prop:profilesmoothing}, with
$s=\frac{9}{10}$ and $a(t)=\sin s-\int_t^s p(u)\,du$. Note that $0 \leq p(t) \leq \cos t$, hence $a(t) \geq \sin t$ and $|a'|=|p|\leq1$; note also that $a_0>0$, because $p(t)=\lambda t+O(t^3)$ near $0$ while $\cos t=1+O(t^2)$, so that $\int_0^s p(u)du < \sin s$. Now, let $\sigma_1,\sigma_2,\sigma_3$ be the standard left-invariant coframe on $\Sph^3$, normalized by $d\sigma_1=-2\sigma_2\wedge\sigma_3$ and cyclically.  Set
$$
g=dt^2+a(t)^2(\sigma_1^2+\sigma_2^2)+\sin^2t\,\sigma_3^2\comma 0\leq t\leq s.
$$
With $e^0=dt$, $e^1=a\sigma_1$, $e^2=a\sigma_2$, and $e^3=\sin t\,\sigma_3$, the curvature operator $\co$ takes the form of Lemma \ref{rem:sharedframe}, with \mbox{$a_1=a_2=\frac{a''}{a}$,} $a_3=-1$ and
$$
c_1=c_2=-\frac{\sin^2t}{a^4}+\frac{a'\cos t}{a\sin t}\comma
c_3=-\frac4{a^2}+\frac{3\sin^2t}{a^4}+\left(\frac{a'}a\right)^{\!2}\cdot
$$
Set
$$
G(t)\defeq\frac{\sin^3t}{a(t)^3\cos t}
\commas
D(t)\defeq a'(t)\cos t-\frac{\sin^3t}{a(t)^3}=\cos t\,(p(t)-G(t)).
$$
We now make repeated use of Proposition \ref{prop:profilesmoothing}. Since $a_1=a_2=p'/a$ and $c_1=c_2=D/(a\sin t)$, we have that $a_1c_1=a_2c_2>0$ on $(0,r)\cup(r,s]$ and $a_1=a_2=c_1=c_2=0$ at $t=r$.  Also,
$$c_3=\frac1{a^2}\bigg(\!\!-\!4+3\Big(\frac{\sin t}{a}\Big)^{\!2}+(a')^2\bigg)<0$$ on $(0,s]$, hence $a_3c_3>0$
and so \mbox{Lemma \ref{rem:sharedframe}} applies pointwise on $(0,s]$. Near $0$ we have that $p(t)=\lambda t+O(t^3)$, so that $$a(t)=a_0+\int_0^t p(u)du = a_0+\frac{\lambda}{2}t^2+O(t^4)$$
with $a_0>0$ and $\lambda>0$.  Since $\sin t=t+O(t^3)$, the metric extends smoothly across the $\CP^1$-bolt (i.e., the collapsed $\mathbb{S}^1$-fiber leaves a smooth $\mathbb{CP}^1$ fixed surface), and the resulting manifold is $\CP^2\setminus B^4$.  Topologically, the collapsing $\sigma_3$-circle is the Hopf fiber of the $\Sph^3$-orbits, so the bolt is a copy of $\CP^1$ whose normal disk bundle has Euler number $+1$ with the chosen orientation; this disk bundle is $\CP^2\setminus B^4$.  Near $s$ we have that $p(t)=\cos t$, hence $a(t)=\sin t$; the metric is therefore exactly round near the boundary. It remains to define $h_{\scalebox{0.4}{$C$}}$.  On the open set where $a_2c_2=a_1c_1>0$, set
$$
x_0\defeq\sqrt{\frac{c_1c_3}{a_1a_3}}\comma x_1=1=x_2\comma
x_3\defeq\sqrt{\frac{a_1c_3}{c_1a_3}},
$$
where $a_1=a_2,c_1=c_2$, and $a_3=-1$, and define
$$
h_{\scalebox{0.4}{$C$}}
\defeq
x_0^{-1}(e^0)^2+(e^1)^2+(e^2)^2+x_3^{-1}(e^3)^2.
$$
The equations in Lemma \ref{rem:sharedframe} are then satisfied (after relabeling), so that $\Rmr(\Ls{h_{\scalebox{0.4}{$C$}}}{+},\Ls{h_{\scalebox{0.4}{$C$}}}{-})=0$.  At $r$, recall that $a_1=c_1=0$. Since $$a_1=p'/a \comma c_1=\frac{\cos t\,(p-G)}{a\sin t},$$
the quotient $c_1/a_1$ is a positive smooth multiple of \mbox{$(p-G)/p'$,} and therefore extends smoothly and positively across $r$. Hence $x_0$ and $x_3$ extend smoothly and positively there.  At the bolt $t=0$, we have $a_1(0)=c_1(0)=\lambda/a_0$ (the latter via L'H\^opital's Rule), while $a_3(0)=-1$ and \mbox{$c_3(0)=-4/a_0^2$,} so $$x_0^{-1}(e^0)^2+x_3^{-1}(e^3)^2=\frac{a_0}{2}(dt^2+\sin^2t\,\sigma_3^2).$$ Thus $h_{\scalebox{0.4}{$C$}}$ has the same smooth polar behavior as $g_{\scalebox{0.4}{$C$}}$.  On the round boundary collar at $s$, $a_i=c_i=-1$ for all $i=1,2,3$, hence $x_0=x_3=1$ and $h_{\scalebox{0.4}{$C$}}=g_{\scalebox{0.4}{$C$}}$.
\end{proof}

\begin{prop}
\label{prop:gluing}
Suppose compact oriented $4$-manifolds with boundary carry Riemannian pairs $(g_i,h_i)$ satisfying \emph{$\text{Rm}_{\scalebox{0.6}{$g_i$}}(\Ls{h_i}{+},\Ls{h_i}{-})=0$}, and suppose that $g_i=h_i=dt^2+\sin^2t\,g_{\scalebox{0.5}{$\Sph^3$}}$ on matching boundary collars.  If the collars are identified by orientation-reversing collar maps which identify the round metrics on the $\Sph^3$-slices and match the collar coordinate in the usual way, then the resulting glued manifold carries a smooth Riemannian pair $(g,h)$ satisfying \emph{$\Rmr(\Ls{h}{+},\Ls{h}{-})=0$}.
\end{prop}

\begin{proof}
The orientation-reversing boundary identification is the usual one for producing an oriented glued manifold. The compatibility of the collar identifications means that the $g_i$ and $h_i$ have identical pullbacks on the overlap; hence they glue to smooth Riemannian metrics $g$ and $h$. Away from the seam, the condition is one of the original pointwise conditions. On the collar itself, the metric is round and $h=g$, so the condition holds there as well: the round metric is Einstein, and therefore its curvature preserves the ordinary Hodge splitting. Since $\Rmr(\Ls{h}{+},\Ls{h}{-})=0$ is a local pointwise condition, the glued pair satisfies it globally.
\end{proof}

\begin{thm}[A Hitchin--Thorpe-violating example]
\label{thm:CP2example}
$\#_5\CP^2$ admits a Riemannian pair $(g,h)$ satisfying \emph{$\Rmr(\Ls{h}{+},\Ls{h}{-})=0$}, but no Einstein metric.
\end{thm}

\begin{proof}
In the unit round sphere $\Sph^4\subset\RR^5$, choose the five vertices of a regular $4$-simplex centered at the origin. Their pairwise inner products are $-1/4$, so their mutual spherical distances are $\arccos(-1/4)$. Because $s = 9/10$, we have that
$2s=9/5<\arccos(-1/4)$, so that the five open geodesic balls of radius $s$ centered at these points are pairwise disjoint. Remove these balls from $\Sph^4$ and glue into the resulting five boundary components five copies of $C\cong\CP^2\setminus B^4$ from Proposition \ref{lemma:cp2core}. The boundary collars are round and agree with $$g_{\scalebox{0.4}{$C$}}=h_{\scalebox{0.4}{$C$}}=dt^2+\sin^2t\,g_{\scalebox{0.5}{$\Sph^3$}},$$ so Proposition \ref{prop:gluing} gives a smooth Riemannian pair $(g,h)$ on the glued manifold satisfying $\Rmr(\Ls{h}{+},\Ls{h}{-})=0$. Topologically, each inserted copy of $C$ performs connected sum with $\CP^2$, so the glued manifold is $\#_5\CP^2$. Therefore, because
$
\chi(\#_5\CP^2)=7$ and
$\tau(\#_5\CP^2)=5$,
we have
$$
2\chi=14<15=3|\tau|.
$$
(In particular, $\rhoz(g,h) \geq \frac{15}{14}$.) Thus $\#_5\CP^2$ violates the Hitchin--Thorpe inequality, and therefore cannot admit an Einstein metric.
\end{proof}

The nonexistence of Einstein metrics on 4-manifolds is a rich subject, with obstructions coming both from Hitchin--Thorpe and Seiberg--Witten theory \cite{lebrun3,ishida}. Our construction goes in a different direction: Theorem \ref{thm:CP2example} exhibits a manifold violating Hitchin--Thorpe, hence admitting no Einstein metric, but satisfying the weaker relative condition \eqref{eqn:relativeHT}.

\section*{References}
\renewcommand*{\bibfont}{\footnotesize}
\printbibliography[heading=none]
\end{document}